\newcommand{\diam}{\mathop{\operator@font diam}}
\begin{document}

\def\s{\subseteq}
\def\h{\widehat}
\def\v{\varphi}
\def\t{\widetilde}
\def\ov{\overline}
\def\L{\Lambda}
\def\l{\lambda}
\def\O{\Omega}
\def\H{I\!\! H}
\def\a{\approx}
\def\k{\widetilde}
\def\la{\lambda}
\def\d{\delta}
\def\L{\Lambda}
\def\O{\Omega}
\def\r{\rho}
\def\ov{\overline}
\def\un{\underline}
\newcommand{\cT}{\mathcal{T}}
\newcommand{\cR}{\mathcal{R}}
\newcommand{\cL}{\mathcal{L}}
\newcommand{\cU}{\mathcal{U}}
\newcommand{\cV}{\mathcal{V}}
\newcommand{\cW}{\mathcal{W}}
\newcommand{\cM}{\mathcal{M}}
\newcommand{\cN}{\mathcal{N}}
\newcommand{\cS}{\mathcal{S}}
\newcommand{\cP}{\mathcal{P}}
\newcommand{\cH}{\mathcal{H}}
\newcommand{\triangleL}{\triangleleft_{\mathcal{L}}}
\newcommand{\triangleR}{\triangleright_{\mathcal{R}}}
\newcommand{\trangleR}{\triangleleft_{\mathcal{R}}}
\newcommand{\trangleL}{\triangleright_{\mathcal{L}}}
\newcommand{\triangleH}{\triangleleft_{\mathcal{H}}}

\newcommand{\triangleLq}{\trianglelefteq_{\mathcal{L}}}
\newcommand{\triangleRq}{\trianglelefteq_{\mathcal{R}}}
\newcommand{\trangleRq}{\trianglerighteq_{\mathcal{R}}}

\title{{\bf On Properties of Nests: Some Answers and Questions.}}

\author{Kyriakos Papadopoulos\\
\small{Kuwait University, Kuwait}}

\date{}
\maketitle

\begin{abstract}
By considering nests on a given space, we explore order-theoretical and
topological properties that are closely related to the structure of a nest.
In particular, we see how subbases given by two dual nests can be an indicator
of how close or far are the properties of the space from the structure
of a linearly ordered space. Having in mind that the term interlocking
nest is a key tool to a general solution of the orderability problem, we give a characterization of interlocking
nest via closed sets in the Alexandroff topology and via lower sets, respectively.
We also characterize bounded subsets of a given set in terms of nests and, finally, we
explore the possibility of characterizing topological groups via
properties of nests. All sections are followed
by a number of open questions, which may give new directions to the orderability
problem.

\end{abstract}

\medskip
\noindent {\bf Keywords:} Nest, $T_0$-separating Nest, $T_1$-separating Nest, Interlocking Nest,
Alexandroff Topology, LOTS, GO-space, Lower Set, Bounded Set, Topological Group.

\medskip
\noindent
{\bf 2010 AMS Subject Classification.} 54C35

\newtheorem{definition}{Definition}[section]
\newtheorem{remark}{Remark}[section]
\newtheorem{remarks}{Remarks}[section]
\newtheorem{notation}{Notation}[section]
\newtheorem{examples}{Examples}[section]
\newtheorem{example}{Example}[section]
\newtheorem{theorem}{Theorem}[section]
\newtheorem{proposition}{Proposition}[section]
\newtheorem{co}{Corollary}[section]
\newtheorem{lemma}{Lemma}[section]
\newtheorem{corollary}{Corollary}[section]

\section{Introduction.}
\subsection{Some Motivation.}

Let $(X,\cT)$ be a topological space and let $X$ be equipped with an order relation $<$. Under
what conditions will $\cT_<$, i.e. the topology induced by the order $<$, be equal to $\cT$?
There were special solutions but no general solution to this problem, known as the {\em orderability problem}, until the early 70s.
This solution came by J. van Dalen and E. Wattel, in \cite{DalenWattel}, where the authors
used the notion of nest and interlocking family of sets. The authors of \cite{Good-Papadopoulos} expanded
the properties used in \cite{DalenWattel}, in order to characterize ordinals. The author of \cite{WillBrian}
investigated properties of nests as subbases generating topologies and the author of \cite{KyriakosPapadopoulos}
used nests in order to state a generalization of the orderability problem.
In this paper our aim is to focus on the algebraic properties of nests and investigate
how these affect the order-theoretic and topological structure of a space. In some cases
we do this in the form of questions and answers. Some open questions are rising from this study
our highlighted.

\subsection{Preliminaries.}

We first state the definitions of upper and lower
sets, which will lead to the definitions of upper
and lower topology (see \cite{Compendium}).

\begin{definition}
Let $(X,<)$ be a partially ordered set and $A \subset X$. We define $\uparrow{A} \subset X$, to be the set:
\[\uparrow{A} = \{x : x \in X \textrm{ and  there exists } y \in A \textrm{, such that } y < x\}.\]
We also define $\downarrow{A} \subset X$, to be the set:
\[\downarrow{A} = \{x : x \in X \textrm{ and  there exists } y \in A \textrm{, such that } x < y\}.\]
\end{definition}

More specifically, if $A= \{y\}$, then:
\[\uparrow{A} = \{x : x \in X \textrm{ and } y < x\}\] and
\[\downarrow{A} = \{x : x \in X \textrm{ and } x < y\}.\]

\begin{definition}
Let $X$ be a partially ordered set. A subset $A \subset X$ is a {\em lower set}, if $A = \downarrow{A}$ and an
{\em upper set}, if $A = \uparrow{A}$.
\end{definition}

The notion of upper set will be particularly useful for the characterization of the term
interlocking nest via the Alexandroff topology.

We recall
that the {\em upper topology} $\cT_U$ is generated
by the subbase $\mathcal{S} = \{X- \downarrow x : x \in X\}$
and the {\em lower topology} $\cT_l$ is generated by the subbase $\mathcal{S} = \{X-\uparrow x : x \in X\}$.
The {\em interval topology} $\cT_{in}$, on $X$, is defined as $\cT_{in}= \cT_U \vee \cT_l$, where $\vee$ stands
for the supremum (supremum in the sense that the set of all topologies on $X$ forms a complete lattice under inclusion).

\begin{definition}\label{definition-T0-T1}
Let $X$ be a set.
\begin{enumerate}
\item A collection $\mathcal{L}$, of subsets of $X$, $T_0$-{\em separates} $X$, if and only if for all $x,y \in X$, such that $x \neq y$, there exist $L \in
\mathcal{L}$, such that $x \in L$ and $y \notin L$ or $y \in L$
and $x \notin L$.

\item A collection $\mathcal{L}$, of subsets of $X$, $T_1$-{\em separates} $X$, if and only if for all $x,y \in X$, such that $x \neq y$, there exist $L,L' \in
\mathcal{L}$, such that $x \in L$ and $y \notin L$ and also $y \in
L'$ and $x \notin L'$.
\end{enumerate}
\end{definition}

One can easily see the link
between Definition \ref{definition-T0-T1} and the $T_0$ and $T_1$ separation axioms of topology: a topological space $(X,\mathcal{T})$ is
$T_0$ (resp. $T_1$), if and only if there is a subbase $\mathcal{S}$, for $\mathcal{T}$, which $T_0$-separates (resp. $T_1$-separates) $X$.

\begin{definition}
Let $X$ be a set and let $\mathcal{L}$ be a family
of subsets of $X$. $\mathcal{L}$ is a {\em nest}
on $X$, if for every $M, N \in \mathcal{L}$, either $M \subset N$ or $N \subset M$.
\end{definition}

\begin{definition}\label{definition - interlocking}
Let $X$ be a set and let $\mathcal{S} \subset \mathcal{P}(X)$. We say that
$\mathcal{S}$ is interlocking, if and only if
for each $T \in \mathcal{S}$, such that:
\[T = \bigcap \{S : T \subset S, S \in \mathcal{S} - \{T\}\}\] we have that:
\[T = \bigcup \{S : S \subset T, S \in \mathcal{S} - \{T\}\}.\]
\end{definition}

The notion of interlocking nest has played an important role to the general solution of
the orderability problem (see \cite{Good-Papadopoulos}).

\begin{definition}\label{definition-order}
Let $X$ be a set and let $\mathcal{L}$ be a nest on $X$.
We define an order relation on $X$ via the
nest $\mathcal{L}$, as follows:
\[x \triangleleft_{\mathcal{L}} y \Leftrightarrow \,\exists \, L \in \mathcal{L}, \textrm{ such that } x \in L \textrm{ and } y \notin L\]
\end{definition}

It follows from Definitions \ref{definition-T0-T1} and \ref{definition-order} that if the nest $\cL$ is $T_0$-separating, then the ordering $\triangleL$ is linear, provided the ordering is reflexive.

\begin{remark}
If $\cL$ is a nest on $X$, then by $\cL^c$ we will denote the nest $\cL^c = \{X-L : L \in \cL\} $.

We remark that $\triangleleft_{\cL^c} = \triangleright_{\cL}$, that is $x \triangleleft_{\cL^c} y$,
if and only if $y \triangleL x$.
\end{remark}

\section{An Order Relation on a set $X$ from $\mathcal{S} \subset \mathcal{P}(X)$.}

In this section we use a ``magnifying glass'' to explore
the algebraic properties of the order that was used in
\cite{DalenWattel}, where the authors gave a general
characterization of linearly ordered spaces.

\begin{definition}\label{definition-order definition by nest}
Let $X$ be a set and $\mathcal{S} \subset \mathcal{P}(X)$ be a family of subsets of $X$. We define a relation on $X$ as follows: \[x
\triangleleft_{\mathcal{S}} y \textrm{ iff there exists } S \in
\mathcal{S}, \textrm{ such that } x \in S \textrm{ and } y \notin
S\]
\end{definition}

This relation was particularly studied in \cite{Good-Papadopoulos} in the case
where $\mathcal{S}$ is a nest.

\begin{definition}
Let $X$ be a nonempty set and $<$ be an order relation, on $X$. We
say that $<$ is a {\em generated order}, if and only if there
exists $\mathcal{S} \subset \mathcal{P}(X)$, such that
$\triangleleft_{\mathcal{S}} = <$.
\end{definition}

\begin{lemma}
Let $X$ be a set and let $x,y \in X$, such that $x \neq y$. Let also $\mathcal{L} \subset \mathcal{P}(X)$ be a family
of subsets of $X$. Then, the generated order $\triangleleft_{\mathcal{L}}$, by $\mathcal{L}$,
can be defined as a subset of the Cartesian product, in the following
way:
\[\triangleleft_{\mathcal{L}} = \bigcup \{L \times (X-L) : L \in {\mathcal{L}}\}\]
\end{lemma}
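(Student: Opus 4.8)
The plan is to prove the asserted set equality directly, by showing that an arbitrary ordered pair $(x,y) \in X \times X$ lies in the left-hand relation if and only if it lies in the right-hand union. Since both sides are subsets of the Cartesian product $X \times X$, establishing this membership equivalence for every pair settles the identity. I would begin by fixing a pair $(x,y)$ and reading off what membership in $\triangleL$, viewed as a subset of $X \times X$, means according to Definition \ref{definition-order definition by nest}: namely, that there exists $L \in \cL$ with $x \in L$ and $y \notin L$.

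The one genuine translation step is to rewrite the clause ``$y \notin L$'' as ``$y \in X - L$''; the conjunction ``$x \in L$ and $y \in X - L$'' then says exactly that $(x,y) \in L \times (X-L)$. Quantifying existentially over $L \in \cL$ converts this into the statement $(x,y) \in \bigcup\{L \times (X-L) : L \in \cL\}$, because membership in a union is precisely the existence of one member of the family containing the element. Reading the chain of equivalences in reverse yields the opposite inclusion, so the two sides coincide. No case analysis is required, and I note that the nest hypothesis plays no role here: the identity holds for any family $\cL \subset \cP(X)$ whatsoever.

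I do not expect any real obstacle, since the result is essentially a reformulation of Definition \ref{definition-order definition by nest} in the language of Cartesian products, carried out through a single chain of equivalences. The only point deserving a remark is the hypothesis $x \neq y$, which is in fact harmless and could be dropped entirely: a pair $(x,x)$ can never belong to $\triangleL$, since no $L$ satisfies both $x \in L$ and $x \notin L$ simultaneously, and for the same reason $(x,x) \notin L \times (X-L)$ for any $L$. Thus the diagonal is automatically excluded from both sides, and the equality holds verbatim whether or not one restricts attention to distinct points.
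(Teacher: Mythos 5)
Your proof is correct and follows essentially the same route as the paper: both simply unwind Definition \ref{definition-order definition by nest}, translating ``$y \notin L$'' into ``$y \in X-L$'' so that the existential clause becomes membership in the union $\bigcup\{L \times (X-L) : L \in \mathcal{L}\}$ (indeed, the paper's own proof is an even terser one-line version of this unpacking). Your added observation that the hypothesis $x \neq y$ is superfluous, since the diagonal is excluded from both sides automatically, is accurate and a worthwhile refinement.
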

\begin{proof}
Indeed, if $(x,y) \in \bigcup \{L \times (X-L) : L \in
{\mathcal{L}}\}$, then there exists $L \in {\mathcal{L}}$, such
that $(x,y) \in L \times (X-L)$.
\end{proof}

\begin{definition}
Let $X$ be a nonempty set. If $A,B \subset X \times X$, then the
composition of the two sets $A$ and $B$ is defined as follows:
\[A \circ B = \{(x,y) \in X \times X : \,\exists\,z \in X,\,(x,z) \in B \textrm{ and } (z,y) \in A\}\]
\end{definition}

\begin{proposition}\label{theorem-order pure set}
Let $X$ be a set and let $\mathcal{S} \subseteq
\mathcal{P}(X)$ be a subset of $\mathcal{P}(X)$, such that
it satisfies the property that, for every $S,T \in \mathcal{S}$, there
exists $R \in \mathcal{S}$, such that:
\[[S \times (X-S)]\,\circ [T \times (X-T)] \, \subset R \times (X-R)\]

Then,
the relation $\triangleleft_{\mathcal{S}}$ is a transitive
relation.
\end{proposition}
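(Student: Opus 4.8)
The plan is to recast transitivity as a single inclusion of relations and then push the hypothesis through a union. By the earlier Lemma, the generated order is the union of products
\[\triangleleft_{\mathcal{S}} = \bigcup \{S \times (X-S) : S \in \mathcal{S}\}.\]
Recall that a binary relation $R$ on $X$ is transitive precisely when $R \circ R \subseteq R$: unwinding the definition of composition, $(x,y) \in R \circ R$ means there is some $z$ with $(x,z) \in R$ and $(z,y) \in R$, so demanding $R \circ R \subseteq R$ is exactly the assertion that $x \mathrel{R} z$ and $z \mathrel{R} y$ force $x \mathrel{R} y$. Thus it suffices to prove $\triangleleft_{\mathcal{S}} \circ \triangleleft_{\mathcal{S}} \subseteq \triangleleft_{\mathcal{S}}$.

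First I would record that composition of relations distributes over arbitrary unions in each argument: for families $\{A_i\}$ and $\{B_j\}$ one has $(\bigcup_i A_i) \circ (\bigcup_j B_j) = \bigcup_{i,j} (A_i \circ B_j)$, which is immediate from the single existential quantifier in the definition of $\circ$. Applying this to the two copies of the union above gives
\[\triangleleft_{\mathcal{S}} \circ \triangleleft_{\mathcal{S}} = \bigcup_{S,T \in \mathcal{S}} \big( [S \times (X-S)] \circ [T \times (X-T)] \big).\]

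Now the hypothesis does the work term by term. For each pair $S,T \in \mathcal{S}$ it supplies an $R \in \mathcal{S}$ with $[S \times (X-S)] \circ [T \times (X-T)] \subset R \times (X-R)$, and since $R \in \mathcal{S}$ the product $R \times (X-R)$ is one of the sets in the union defining $\triangleleft_{\mathcal{S}}$, whence $R \times (X-R) \subseteq \triangleleft_{\mathcal{S}}$. Consequently every term of the displayed union lies inside $\triangleleft_{\mathcal{S}}$, so the whole union does, giving $\triangleleft_{\mathcal{S}} \circ \triangleleft_{\mathcal{S}} \subseteq \triangleleft_{\mathcal{S}}$ as required.

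I expect the only real subtlety to be bookkeeping around the direction in which the paper's composition is written: one must check that the intermediate point $z$ witnessing $(x,z) \in \triangleleft_{\mathcal{S}}$ and $(z,y) \in \triangleleft_{\mathcal{S}}$ is consistent with the labelling $[S \times (X-S)] \circ [T \times (X-T)]$ in the hypothesis, which may require swapping the roles of $S$ and $T$; since the hypothesis quantifies over all pairs this relabelling is harmless. An entirely equivalent, perhaps more transparent, route is the elementwise one: assume $x \triangleleft_{\mathcal{S}} y$ and $y \triangleleft_{\mathcal{S}} z$, pick witnessing sets $S$ and $T$, observe that $(x,z)$ then lies in the relevant composition with $z=y$ serving as the intermediate point, and invoke the hypothesis to land $(x,z)$ in some $R \times (X-R)$, i.e. $x \triangleleft_{\mathcal{S}} z$.
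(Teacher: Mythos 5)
Your proposal is correct, and it packages the argument differently from the paper, so a comparison is worthwhile. The paper's proof is elementwise: it takes $x,y,z$ with $x \triangleleft_{\mathcal{S}} y$ and $y \triangleleft_{\mathcal{S}} z$, picks witness sets $S_1, S_2$, observes that $(x,z) \in [S_2 \times (X-S_2)] \circ [S_1 \times (X-S_1)]$, and applies the hypothesis to that single composition --- exactly the ``elementwise route'' you mention only in passing at the end. Your main proof instead works at the level of relation algebra: you invoke the earlier Lemma to write $\triangleleft_{\mathcal{S}} = \bigcup \{S \times (X-S) : S \in \mathcal{S}\}$, recast transitivity as $\triangleleft_{\mathcal{S}} \circ \triangleleft_{\mathcal{S}} \subseteq \triangleleft_{\mathcal{S}}$, and use distributivity of $\circ$ over unions to reduce the inclusion to a term-by-term application of the hypothesis. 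The two arguments have the same engine (the hypothesis applied to one composition of products), but your formulation buys two small things. First, it makes explicit the role of the Lemma, which the paper states but never actually uses. Second, it is uniform in all cases: the paper's proof begins ``let $x,y,z$ be distinct elements,'' which formally leaves the case $x=z$ untreated (transitivity quantifies over all triples, and one must observe that the hypothesis forbids $x \triangleleft_{\mathcal{S}} y \triangleleft_{\mathcal{S}} x$, since otherwise $(x,x)$ would lie in some $R \times (X-R)$, an impossibility); your union argument needs no such case split, since it shows directly that every pair in the composition, diagonal or not, lands in some $R \times (X-R)$. Your caution about the direction of composition is also well placed: under the paper's convention the second factor of $A \circ B$ acts first, so the terms of your union are $[S\times(X-S)]\circ[T\times(X-T)]$ with $T$ acting first, and, as you say, this is harmless precisely because the hypothesis quantifies over all ordered pairs $(S,T)$.
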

\begin{proof}
Let $x,y,z$ be distinct elenments in $X$, where $x
\triangleleft_{\mathcal{S}} y$ and $y \triangleleft_{\mathcal{S}}
z$. Then, there exists $S_1 \in \mathcal{S}$, such that $x \in
S_1$ and $y \notin S_1$ and also there exists $S_2 \in
\mathcal{S}$, such that $y \in S_2$ and $z \notin S_2$. This
implies that:
\[(x,y) \in S_1 \times (X-S_1)\] and
\[(y,z) \in S_2 \times (X-S_2).\]
Thus, $(x,z) \in [S_2 \times (X-S_2)] \circ [S_1 \times (X-S_1)]$.

By the hypothesis, there exists $T \in \mathcal{S}$, such that
$[S_2 \times (X-S_2)] \circ [S_1 \times (X-S_1)] \subset T \times
(X-T)$. That is, $(x,z) \in T \times (X-T)$, which implies that $x
\in T$ and $z \notin T$, which finally implies that $x
\triangleleft_{\mathcal{S}} z$, which finishes the proof.
\end{proof}

\begin{remark}
The converse of the Proposition \ref{theorem-order pure set}
is not always true. We give a counterexample. Let us consider
$\mathcal{S} = \{\{x\}: x \in X\}$. The relation
$\triangleleft_{\mathcal{S}}$ is obviously a transitive one, but
we will prove that $\mathcal{S}$ does not satisfy the condition of Proposition \ref{theorem-order pure set}. For
this, let $S \in \mathcal{S}$, where $S = \{x\} \times (X-\{x\})$
and let $T \in \mathcal{S}$, where $T = \{y\} \times (X - \{y\})$,
where $x \neq y$. We also consider $R \in \mathcal{S}$, where $R
= \{z\} \times (X - \{z\})$, such that $S \circ T \subset R$. Then,
we observe that $(y,x) \in T$ and $(x,y) \in S$. So, $(y,y) \in S
\circ T$, which also gives that $(y,y) \in R$. Thus, $y = z$ and
$y \neq z$, which leads into a contradiction.
\end{remark}

\begin{corollary}\label{corollary-transitivity}
Let $X$ be a set and let $\mathcal{S}$ be a nest on $X$. Then, the relation
$\triangleleft_{\mathcal{S}}$ is transitive.
\end{corollary}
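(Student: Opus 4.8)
The plan is to derive this corollary directly from Proposition \ref{theorem-order pure set}. Since that proposition already delivers transitivity of $\triangleleft_{\mathcal{S}}$ once a certain composition hypothesis is met, it suffices to verify that every nest automatically satisfies that hypothesis: that for each pair $S,T \in \mathcal{S}$ there exists $R \in \mathcal{S}$ with
\[[S \times (X-S)]\,\circ\,[T \times (X-T)] \subset R \times (X-R).\]

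First I would unwind the definition of composition on the left-hand side. A pair $(x,y)$ lies in $[S \times (X-S)]\,\circ\,[T \times (X-T)]$ precisely when there is some $z \in X$ with $(x,z) \in T \times (X-T)$ and $(z,y) \in S \times (X-S)$, that is, with $x \in T$, $z \notin T$, $z \in S$ and $y \notin S$. The crucial observation is that any such witness $z$ satisfies $z \in S - T$, so the composition can be nonempty only when $S - T \neq \emptyset$.

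Next I would invoke the defining property of a nest: for the given $S,T$ either $S \subset T$ or $T \subset S$. In the first case $S - T = \emptyset$, so the composition is empty and is trivially contained in $R \times (X-R)$ for any choice of $R$. In the second case $T \subset S$, and then for any $(x,y)$ in the composition we have $x \in T \subset S$, hence $x \in S$, together with $y \notin S$; thus $(x,y) \in S \times (X-S)$. In both situations the choice $R = S$ works, so the hypothesis of Proposition \ref{theorem-order pure set} is fulfilled and the corollary follows at once.

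I expect the only delicate point to be the bookkeeping inside the composition: one must read off the four membership conditions in the correct order (recalling that $\circ$ composes the right factor first) and then recognise that it is exactly the intermediate element $z$ that forces the inclusion $T \subset S$ through the nest property. Once that is isolated, the selection $R = S$ is immediate and no further calculation is required.
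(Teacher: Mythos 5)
Your proof is correct and takes essentially the same route as the paper: both reduce the corollary to verifying the composition hypothesis of Proposition \ref{theorem-order pure set} via the nest dichotomy $S \subset T$ or $T \subset S$, choosing $R$ to be the larger set. If anything, your version is slightly more complete, since you treat the composition exactly as it appears in the hypothesis and explicitly dispose of the case where it is empty, a case the paper leaves implicit by only proving containment for the arrangement in which the first-applied factor is contained in the second.
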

\begin{proof}
We prove that $\mathcal{S}$ satisfies the property that for every $S,T \in \mathcal{S}$, there
exists $R \in \mathcal{S}$, such that:
\[[S \times (X-S)]\,\circ [T \times (X-T)] \, \subset R \times (X-R)\]

Let $S \in \mathcal{S}$
and $T \in \mathcal{S}$. Since $\mathcal{S}$ is a nest, we have that either $S
\subset T$ or $T \subset S$.

Let us suppose that $S \subset T$. Then, obviously, $[T \times
(X-T)] \circ [S \times (X-S)] \subset T \times (X-T)$. Indeed, let
$(x,y) \in [T \times (X-T)] \circ [S \times (X-S)]$. Then, there
exists $z \in X$, such that $(x,z) \in S \times (X-S)$ and $(z,y)
\in T \times (X-T)$. Then, $(x,z) \in S \times (X-S)$ implies that
$x \in S \subset T$, which implies that $x \in T$. But, $(x,y) \in
T \times (X-T)$, which implies that $y \notin T$. Thus, finally,
$(x,y) \in T \times (X-T)$, which completes the proof, according
to the Proposition \ref{theorem-order pure set}.
\end{proof}

\begin{proposition}\label{proposition-T_0-separation}
Let $X \neq \emptyset$ and let $\mathcal{S} \subset
\mathcal{P}(X)$. Let $\Delta = \{(x,x) : x \in X\}$. $\mathcal{S}$
$T_0$-separates $X$, if and
only if: \[X \times X - \Delta \subset \bigcup_{S \in \mathcal{S}} [S \times (X-S)] \cup [(X-S) \times S]~~~(1)\]
\end{proposition}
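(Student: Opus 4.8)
The statement is a biconditional, so I would prove both directions by unwinding the definitions of $T_0$-separation and of the set on the right-hand side. The key observation to isolate first is the following: for a fixed pair $x\neq y$ and a fixed $S\in\mathcal{S}$, we have $(x,y)\in [S\times(X-S)]\cup[(X-S)\times S]$ precisely when exactly one of $x,y$ lies in $S$. Indeed, $(x,y)\in S\times(X-S)$ means $x\in S$ and $y\notin S$, while $(x,y)\in (X-S)\times S$ means $y\in S$ and $x\notin S$; together these say that $S$ contains exactly one of the two points. This is exactly the $T_0$-separating condition of Definition \ref{definition-T0-T1}(1) stated for the single pair $(x,y)$ and the single set $S$.

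With this dictionary in place, the forward direction is routine. The plan is: assume $\mathcal{S}$ $T_0$-separates $X$, take any $(x,y)\in X\times X-\Delta$, so $x\neq y$. By $T_0$-separation there is some $S\in\mathcal{S}$ containing exactly one of $x,y$; by the observation above this places $(x,y)$ in $[S\times(X-S)]\cup[(X-S)\times S]$, hence in the union over all of $\mathcal{S}$. This establishes the inclusion $(1)$.

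For the converse I would run the same equivalence in reverse. Assume inclusion $(1)$ holds and take $x,y\in X$ with $x\neq y$; then $(x,y)\in X\times X-\Delta$, so by $(1)$ there exists $S\in\mathcal{S}$ with $(x,y)\in [S\times(X-S)]\cup[(X-S)\times S]$. Unwinding which of the two Cartesian-product pieces $(x,y)$ falls into yields either ($x\in S$ and $y\notin S$) or ($y\in S$ and $x\notin S$), which is exactly the disjunction required by the $T_0$-separation condition. This gives $T_0$-separation and closes the biconditional.

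I do not expect a genuine obstacle here, since the proposition is essentially a restatement of the $T_0$ condition in the language of relations; the one point that needs care is bookkeeping. The right-hand side of $(1)$ symmetrizes each set $S$ by adjoining both $S\times(X-S)$ and its ``transpose'' $(X-S)\times S$, whereas Definition \ref{definition-T0-T1}(1) already builds in the symmetric ``or'' over which point is separated. The mild subtlety to watch is that a single $(x,y)$ with $x\neq y$ is an ordered pair, so I must make sure I account for both the case where the separating set contains $x$ and the case where it contains $y$; the explicit union of the two product sets is exactly what handles this, so no case is lost.
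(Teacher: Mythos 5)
Your proposal is correct and follows essentially the same route as the paper: both directions are handled by directly unwinding the definition of $T_0$-separation against the two product pieces $S \times (X-S)$ and $(X-S) \times S$, with your ``exactly one of $x,y$ lies in $S$'' dictionary being precisely the case split the paper performs. No gap to report.
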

\begin{proof}
Let $(1)$ hold. We prove that the relation
$\mathcal{S}$ $T_0$-separates $X$. So, let $x,y \in
X$, where $x \neq y$. Then, there exists $S \in \mathcal{S}$,
such that $(x,y) \in S \times (X-S)$ or there exists $T \in \mathcal{S}$,
such that $(x,y) \in (X-T) \times T$. So, either $x \in S$ and $y \notin S$ or
$y \in T$ and $x \notin T$. Thus, $\mathcal{S}$ is $T_0$-separating.

On the other hand, we consider $\mathcal{S}$
to be $T_0$-separating and we will prove that $(1)$ holds. Indeed,
if $(x,y) \in X \times X - \Delta$, then for $x \neq y$, and since
$\mathcal{S}$ is $T_0$-separating, $x \triangleleft_{\mathcal{S}} y$ or $x
\triangleright_{\mathcal{S}} y$.

If $x \triangleleft_{\mathcal{S}} y$, then there exists $S \in
\mathcal{S}$, such that $x \in S$ and $y \notin S$, which implies
that there exists $S \in \mathcal{S}$, such that $(x,y) \in
S \times (X-S)$.

If $x \triangleright_{\mathcal{S}}  y$, then there exists $S \in
\mathcal{S}$, such that $y \in S$ and $x \notin S$, which implies
that $x \in X-S = T$, say, and $y \notin T$, where $X-T \in
\mathcal{S}$, which implies that $(x,y) \in T \times (X-T)$.

So, $(1)$ holds, and this completes the proof.
\end{proof}

An immediate consequence of Proposition \ref{proposition-T_0-separation}
is the following corollary.

\begin{corollary}
A topological space $X$ is $T_0$, if and only if it admits a subbasis
$\mathcal{S}$, such that: \[X \times X - \Delta \subset \bigcup_{S \in \mathcal{S}} [S \times (X-S)] \cup [(X-S) \times S]\]
\end{corollary}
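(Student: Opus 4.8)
The plan is to derive this purely by chaining two facts already available, without any fresh argument. First I would recall the topological characterization of the $T_0$ axiom stated immediately after Definition~\ref{definition-T0-T1}: a space $(X,\mathcal{T})$ is $T_0$ if and only if it admits a subbase $\mathcal{S}$ that $T_0$-separates $X$. Second, Proposition~\ref{proposition-T_0-separation} translates the purely combinatorial condition ``$\mathcal{S}$ $T_0$-separates $X$'' into the displayed inclusion $X \times X - \Delta \subset \bigcup_{S \in \mathcal{S}} [S \times (X-S)] \cup [(X-S) \times S]$. The corollary is then obtained simply by substituting the second equivalence into the first.

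Concretely, for the forward direction I would assume $X$ is $T_0$ and $X \neq \emptyset$; the cited characterization yields a subbase $\mathcal{S}$ for $\mathcal{T}$ that $T_0$-separates $X$, and Proposition~\ref{proposition-T_0-separation} applied to this very $\mathcal{S}$ delivers the inclusion. For the converse I would take a subbase $\mathcal{S}$ satisfying the inclusion, invoke Proposition~\ref{proposition-T_0-separation} in the opposite direction to conclude that $\mathcal{S}$ $T_0$-separates $X$, and then close with the characterization to deduce that $X$ is $T_0$. The empty space should be dispatched separately and trivially, since it is $T_0$ and the inclusion holds vacuously because $X \times X - \Delta = \emptyset$.

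The only point demanding care — and hence the ``main obstacle'', though it is a light one — is keeping the word \emph{subbase} threaded correctly through both steps. Proposition~\ref{proposition-T_0-separation} is stated for an arbitrary family $\mathcal{S} \subset \mathcal{P}(X)$, so in each direction one must be explicit that the family in play is genuinely a subbase of $\mathcal{T}$: this is supplied by the characterization in the forward direction and assumed outright in the converse. Once this identification is made, the two equivalences compose and the corollary follows immediately.
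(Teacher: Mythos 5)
Your proposal is correct and follows exactly the route the paper intends: the paper offers no separate argument, presenting the corollary as an ``immediate consequence'' of Proposition~\ref{proposition-T_0-separation} combined with the characterization of $T_0$ spaces via $T_0$-separating subbases stated after Definition~\ref{definition-T0-T1}, which is precisely your two-step composition. Your explicit attention to the subbase bookkeeping and the empty-space case is a harmless refinement of the same argument.
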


\begin{proposition}
Let $X$ be a nonempty set and let also
$\mathcal{S}_1,\mathcal{S}_2 \subset \mathcal{P}(X)$, such that
$\emptyset \in \mathcal{S}_1$ and $\emptyset \in \mathcal{S}_2$.
We define:
\[\mathcal{S}_1 \cup^{\star} \mathcal{S}_2 = \{S_1 \cup S_2 : S_1 \in \mathcal{S}_1,\,S_2 \in \mathcal{S}_2\}\]
Then, $\triangleleft_{\mathcal{S}_1 \cup^{\star} \mathcal{S}_2} =
\triangleleft_{\mathcal{S}_1} \cup \triangleleft_{\mathcal{S}_2}$.
\end{proposition}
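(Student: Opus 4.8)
The plan is to establish the equality of the two relations, regarded as subsets of $X \times X$, by proving the two inclusions separately, each obtained by directly unfolding Definition \ref{definition-order definition by nest}. The one structural fact I would isolate first is how membership in a generic element $S_1 \cup S_2$ of $\mathcal{S}_1 \cup^{\star} \mathcal{S}_2$ distributes over the union: for a pair $(x,y)$ one has $x \in S_1 \cup S_2$ precisely when $x \in S_1$ or $x \in S_2$, while $y \notin S_1 \cup S_2$ precisely when $y \notin S_1$ and $y \notin S_2$. Almost everything else follows mechanically from this observation.

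For the inclusion $\triangleleft_{\mathcal{S}_1 \cup^{\star} \mathcal{S}_2} \subseteq \triangleleft_{\mathcal{S}_1} \cup \triangleleft_{\mathcal{S}_2}$, I would suppose $x \triangleleft_{\mathcal{S}_1 \cup^{\star} \mathcal{S}_2} y$, so that there exist $S_1 \in \mathcal{S}_1$ and $S_2 \in \mathcal{S}_2$ with $x \in S_1 \cup S_2$ and $y \notin S_1 \cup S_2$. By the distribution above, $y$ lies outside both $S_1$ and $S_2$, whereas $x$ lies in at least one of them. Whichever of the two witnesses actually contains $x$ then separates $x$ from $y$ inside the corresponding family, yielding $x \triangleleft_{\mathcal{S}_1} y$ in the first case and $x \triangleleft_{\mathcal{S}_2} y$ in the second. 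Either way, $(x,y)$ lies in $\triangleleft_{\mathcal{S}_1} \cup \triangleleft_{\mathcal{S}_2}$. Notably, this direction uses no hypothesis on $\mathcal{S}_1$ or $\mathcal{S}_2$.

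For the reverse inclusion, I would argue by the symmetric cases of the union. If $x \triangleleft_{\mathcal{S}_1} y$ is witnessed by some $S_1 \in \mathcal{S}_1$, then I would pad it with the empty set taken from $\mathcal{S}_2$: the set $S_1 \cup \emptyset = S_1$ belongs to $\mathcal{S}_1 \cup^{\star} \mathcal{S}_2$ and continues to contain $x$ while excluding $y$, so $x \triangleleft_{\mathcal{S}_1 \cup^{\star} \mathcal{S}_2} y$. The case $x \triangleleft_{\mathcal{S}_2} y$ is handled identically by padding with $\emptyset \in \mathcal{S}_1$.

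The only genuine subtlety, and hence the step I would flag as the crux, is this last padding argument, since it is the sole place where the standing hypotheses $\emptyset \in \mathcal{S}_1$ and $\emptyset \in \mathcal{S}_2$ are actually needed: they are exactly what guarantees that a separating set belonging to a single family can be realized, unchanged, as a member of the combined family $\mathcal{S}_1 \cup^{\star} \mathcal{S}_2$. Without the empty set available in each family one cannot in general embed a witness from $\mathcal{S}_1$ (or $\mathcal{S}_2$) into $\mathcal{S}_1 \cup^{\star} \mathcal{S}_2$, and the inclusion $\triangleleft_{\mathcal{S}_1} \cup \triangleleft_{\mathcal{S}_2} \subseteq \triangleleft_{\mathcal{S}_1 \cup^{\star} \mathcal{S}_2}$ can break down. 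Combining the two inclusions then gives the claimed equality.
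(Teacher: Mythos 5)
Your proposal is correct and follows essentially the same route as the paper's own proof: the forward inclusion by distributing membership in $S_1 \cup S_2$ over the two families, and the reverse inclusion by padding a witness $S_1$ (resp.\ $S_2$) with $\emptyset \in \mathcal{S}_2$ (resp.\ $\emptyset \in \mathcal{S}_1$). Your explicit remark on where the hypotheses $\emptyset \in \mathcal{S}_1$, $\emptyset \in \mathcal{S}_2$ are used is a nice clarification, but the argument itself is the same.
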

\begin{proof}
Let $x \triangleleft_{\mathcal{S}_1 \cup^{\star} \mathcal{S}_2}
y$. Then, there exist $S_1 \cup S_2 \in \mathcal{S}_1
\bigcup^{\star} \mathcal{S}_2$, where $S_1 \in \mathcal{S}_1$ and
$S_2 \in \mathcal{S}_2$, such that $x \in S_1 \cup S_2$ and $y
\notin S_1 \cup S_2$, something that implies that $[x \in S_1
\textrm{ and } y \notin S_1]$ or $[x \in S_2 \textrm{ and } y
\notin S_2]$, which implies that $x \triangleleft_{\mathcal{S}_1}
y$ or $x \triangleleft_{\mathcal{S}_2} y$, which finally implies
that $(x,y) \in \triangleleft_{\mathcal{S}_1} \cup
\triangleleft_{\mathcal{S}_2}$, as required.

On the other way round, let $(x,y) \in
\triangleleft_{\mathcal{S}_1} \cup \triangleleft_{\mathcal{S}_2}$.
But this implies that $x \triangleleft_{\mathcal{S}_1} y$ or $x
\triangleleft_{\mathcal{S}_2} y$, which implies that there exists
$S_1 \in \mathcal{S}_1$, such that $[x \in S_1 \textrm{ and } y
\notin S_1]$ or $[x \in S_2 \textrm{ and } y \notin
\mathcal{S}_2]$, which implies that $(x \in S_1 \cup \emptyset
\textrm{ and } y \notin S_1 \cup \emptyset )$ or $(x \in \emptyset
\cup S_2 \textrm{ and } y \notin \emptyset \cup S_2)$.

In each case, we remark that $x \triangleleft_{\mathcal{S}_1
\cup^{\star} \mathcal{S}_2} y$. Thus,
$\triangleleft_{\mathcal{S}_1} \cup \triangleleft_{\mathcal{S}_2}
\, \subset \, \triangleleft_{\mathcal{S}_1 \cup^{\star}
\mathcal{S}_2}$.

Finally, $\triangleleft_{\mathcal{S}_1 \cup^{\star} \mathcal{S}_2}
= \triangleleft_{\mathcal{S}_1} \cup
\triangleleft_{\mathcal{S}_2}$.
\end{proof}

\begin{remark}
Let $X$ be a nonempty set and $\mathcal{S},\,\mathcal{S}' \in
\mathcal{P}(X)$. We define an equivalence relation, as follows:
\[\mathcal{S} \sim \mathcal{S}' \Leftrightarrow \triangleleft_{\mathcal{S}} = \triangleleft_{\mathcal{S}'}\]
So, the equivalence class of an arbitrary $\mathcal{S} \subset
\mathcal{P}(X)$ will be of the form:
\[\bf{C}(\mathcal{S}) = \{\mathcal{S}' \subset \mathcal{P}(X) : \triangleleft_{\mathcal{S}'} = \triangleleft_{\mathcal{S}}\}\]
\end{remark}

\begin{remark}
Let $X \neq \emptyset$ and $A \subset X$. Let also
$\mathcal{S} = \{A\}$. Then, $\triangleleft_{\mathcal{S}} = A
\times (X - A)$ and $\bf{C}(\mathcal{S}) = \{\mathcal{S}\}$.

We also remark that if we take $\mathcal{S} = \{w\}$, where $w \in
X$, then $x \triangleleft_{\mathcal{S}} y$ is equivalent to $x = w$
and $y \neq w$. Thus, $w$ is the minimal element of $X$, with respect
to $\mathcal{S}$.
\end{remark}

\begin{example}
Let $X$ be an infinite set and let $\mathcal{S} = \{S \subset X,
\textrm{ where } X-S \textrm{ is finite}\}$. Take $x,y \in X$, such that $x \neq y$. We
consider $S_1 = X-\{y\}$ and $S_2 = X - \{x\}$. Then, $x \in S_1$
and $y \notin S_1$ implies that $x \triangleleft_{\mathcal{S}} y$.
Similarly, $y \in S_2$ and $x \notin S_1$ implies $x
\triangleleft_{\mathcal{S}} y$. Thus, $x
\triangleleft_{\mathcal{S}} y$ is not an antisymmetric relation,
but it is obviously a transitive one.
\end{example}

\begin{example}
We claim that the usual order $<$, on $\mathbb{R}$, is a generated
order. Indeed, let $\mathcal{S} = \{(-\infty,x): x \in
\mathbb{R}\}$. Then, $< =
\triangleleft_{\mathcal{S}}$.
\end{example}
\begin{proof}
We first prove that $\triangleleft_{\mathcal{S}} \subset <$. Let
$a \triangleleft_{\mathcal{S}} b$. Then, there exists $(-\infty,x)
\in \mathcal{S}$, such that $a \in (-\infty,x)$ and $b \notin
(-\infty,x)$. Then, $b \in [x,\infty)$. So, $a<b$, which implies
that $\triangleleft_{\mathcal{S}} \subset <$.

Conversely, let $a<b$. Then, we take $x \in (a,b)$. So, $a \in
(-\infty,x)$ and $b \notin (-\infty,x)$, which implies that there
exists $S = (-\infty,x) \in \mathcal{S}$, such that $a \in S$ and
$b \notin S$, which implies $a \triangleleft_{\mathcal{S}} b$,
which implies $< \subset \triangleleft_{\mathcal{S}}$.

Finally, $< = \triangleleft_{\mathcal{S}}$.
\end{proof}

\section{Nests and Orders: Some Further Remarks.}

\subsection{$T_0$-separating Nests as a Measure of Linearity.}

Consider the set of real numbers $\mathbb{R}$, equipped with its usual topology. Let
$\cL = \{(-\infty,a): a \in \mathbb{R}\}$. We remark that for each $(-\infty,a)\in \cL$,
$\sup L = a \notin L$. We also remark that for each $k \in \mathbb{R}$, there exists $L=(-\infty,k) \in \cL$,
such that $\sup L = k$. We will now generalise this remark to arbitrary sets.
In particular, we will use the following three conditions, namely (C1), (C2), (C3),
in order to investigate the relationship between the topologies $\cT_{\cL \cup \cR}$ and
$\cT_{in}^\cL$; this relationship will be a measure of linearity, that is, it will
show how close -or not- is a space from a LOTS, regarding its structure.
 From now on, $\sup$ will be used for abbreviating the term {\em supremum}
and $\inf$ will abbreviate the term {\em infimum}.

Let $\cL$ be a nest on a set $X$. We introduce the following three conditions:

\begin{enumerate}

\item[(C1)] For each $L \in \cL$, there exists $\sup L$ with respect $\triangleLq$.

\item[(C2)] For each $L \in \cL$, there exists $\sup L$ with respect to $\triangleLq$, such that $\sup L \in X-L$.

\item[(C3)] For each $x$, there exists $L \in \cL$, such that there exists $\sup L=x \in X-L$ and also
property (C2) holds.

\end{enumerate}

We deduce the following relations between (C1), (C2) and (C3).

\begin{proposition}\label{proposition-properties}
~
\begin{enumerate}

\item (C3) implies (C2).

\item (C2) implies (C1).

\item (C1) does not always imply (C2).

\item (C2) does not always imply (C3).

\item (C3) implies that $\cL$ is $T_0$-separating.

\item $\cL$ $T_0$-separating implies neither (C1) nor (C2) nor (C3).

\item Neither (C1) nor (C2) imply that $\cL$ is $T_0$-separating.

\end{enumerate}
\end{proposition}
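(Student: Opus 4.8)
The plan is to treat the seven items in three groups: the purely definitional implications (1) and (2), the single substantive positive implication (5), and the four non-implications (3), (4), (6), (7), each settled by an explicit nest on a concrete set, in every case $\mathbb{R}$ with standard rays. A preliminary I would record first is the monotonicity of suprema: if $L \subseteq L'$ in $\cL$ and both suprema exist with respect to $\triangleLq$, then every $\triangleLq$-upper bound of $L'$ bounds $L$, so $\sup L \triangleLq \sup L'$. Two reductions cut the work: since (C2) implies (C1) by item 2 and (C3) implies (C2) by item 1, a single nest satisfying (C2) but not $T_0$-separating proves (7) for both (C1) and (C2), and a single $T_0$-separating nest failing (C1) proves (6) for all three conditions. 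Items (1) and (2) are then immediate from the definitions: (C3) contains the clause ``(C2) holds'' verbatim, and (C2) asks for the very supremum demanded by (C1) plus the extra requirement $\sup L \in X-L$.

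For the heart of the proposition, item (5), I would argue as follows. Assume (C3). Given $x \neq y$, (C3) supplies $L_x, L_y \in \cL$ with $\sup L_x = x \notin L_x$ and $\sup L_y = y \notin L_y$. Since $x \neq y$ forces $L_x \neq L_y$ and $\cL$ is a nest, one contains the other, say $L_x \subseteq L_y$; by the monotonicity observation $x = \sup L_x \triangleLq \sup L_y = y$, and as $x \neq y$ this yields $x \triangleL y$. By the definition of $\triangleL$ there is then some $L \in \cL$ with $x \in L$ and $y \notin L$, which is precisely a set $T_0$-separating $x$ from $y$.

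The four non-implications I would handle by counterexamples, in each of which I would first check that the generated relation $\triangleLq$ coincides with the usual order on the relevant points. For (3), take $\cL = \{(-\infty,a] : a \in \mathbb{R}\}$: every $L$ has $\sup L = a$, so (C1) holds, but $a \in L$, so (C2) fails. For (4), take $\cL = \{(-\infty,q) : q \in \mathbb{Q}\}$: here $\sup(-\infty,q) = q \notin L$, so (C2) holds, yet the suprema are exactly the rationals, so no irrational is a supremum and (C3) fails. For (6), take $\cL = \{(-\infty,a) : a \in \mathbb{R}\} \cup \{\mathbb{R}\}$: the open rays $T_0$-separate $\mathbb{R}$, but $\mathbb{R}$ has no $\triangleLq$-upper bound, so $\sup \mathbb{R}$ does not exist and (C1) fails. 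For (7), take $\cL = \{(-\infty,a) : a < 0\}$: each $\sup(-\infty,a) = a \notin L$, so (C2) holds, while no member contains any point of $[0,\infty)$, so no two such points are separated and the nest is not $T_0$-separating.

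The main obstacle is not item (5) but the bookkeeping in the counterexamples: in each case one must verify that the relation $\triangleL$ generated by the chosen $\cL$ is genuinely the order in which the suprema are computed, and in (6) and (7) that the relevant supremum really fails to exist or that the two points are really inseparable. The delicate point is (6)/(7), where truncating or capping the family of rays can silently alter $\triangleLq$ near the missing endpoint and create spurious incomparabilities; I would therefore verify explicitly that for $\cL = \{(-\infty,a):a<0\}$ the generated relation restricted to the negatives is the usual order, so the computed suprema $\sup(-\infty,a)=a$ are correct, while all points of $[0,\infty)$ are mutually $\triangleL$-incomparable and unseparated.
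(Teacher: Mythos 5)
Your proposal is correct, and its skeleton matches the paper's: items 1 and 2 are read off the definitions; item 5 is proved exactly as in the paper's Proposition \ref{proposition-C3 implies T0} (comparability of $L_x$ and $L_y$ in the nest plus monotonicity of suprema under inclusion, a step the paper asserts and you rightly isolate as a preliminary); the remaining four items are counterexamples. The differences lie in the choice of examples and the bookkeeping. The paper works inside $X=(0,1)$ with truncated families of rays so that each example does double duty: Example \ref{example-(]} settles item 3 together with the (C1) half of item 7, and Example \ref{example-()} settles item 4 together with the (C2) half of item 7; item 6 is then handled by three separate examples (Examples \ref{example-T0 not nec C1}, \ref{example-T0 not nec C2} and \ref{example-T0 not nec C3}). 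You instead exploit the chain (C3) $\Rightarrow$ (C2) $\Rightarrow$ (C1) in contrapositive, covering item 6 with a single $T_0$-separating nest failing (C1) (the open rays together with $\mathbb{R}$ itself, which has no $\triangleLq$-supremum) and item 7 with a single non-$T_0$-separating nest satisfying (C2) (the rays $(-\infty,a)$ with $a<0$). That is more economical, at the cost of coarser information: the paper's Examples \ref{example-T0 not nec C2} and \ref{example-T0 not nec C3} establish the sharper facts that $T_0$-separation together with (C1) still fails to give (C2), respectively (C3), which your single example for item 6 (failing all three conditions at once) does not. Note also that your example for item 3 is precisely the paper's Example \ref{example-T0 not nec C2}; being $T_0$-separating, it cannot double as a witness for item 7, which is why you need the separate truncated family there. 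Finally, one small point in your item 5: the clause that $x\neq y$ forces $L_x\neq L_y$ is unnecessary (and, as stated, would itself require uniqueness of suprema, i.e. the asymmetry of $\triangleL$ for nests, which you never prove); nest-comparability gives $L_x\subseteq L_y$ or $L_y\subseteq L_x$ whether or not the two sets are distinct, and your argument runs verbatim without that clause.
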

\begin{proof}
The statement that (C3) implies (C2) follows immediately from the definition of (C3).
Similarly, (C2) implies (C1) by the definition of (C2). Example \ref{example-(]}
shows that (C1) does not always imply (C2) or $T_0$-separation. Example \ref{example-()}
shows that (C2) does not always imply (C3) or $T_0$-separation. Proposition \ref{proposition-C3 implies T0}
shows that (C3) implies $T_0$-separation. Examples \ref{example-T0 not nec C1}, \ref{example-T0 not nec C2} and
\ref{example-T0 not nec C3} show that the $T_0$-separation of $\cL$ does not necessarily imply
property (C1) or (C2) or (C3).
\end{proof}


\begin{example}\label{example-(]}
Let $X=(0,1)$ and consider the nest $\cL = \{(0,a]: a \in \mathbb{R}, {{1}\over{2}} \le a <1\}$, on $X$. We remark
that condition (C1) is satisfied, but (C2) is not satisfied. This is because for each $L \in \cL$, $\sup L = a \in L$.
This counterexample shows that (C1) does not always imply (C2). We also see that $\cL$ is not $T_0$-separating,
because there does not exist $L \in \cL$ that $T_0$-separates, say, ${{1}\over{4}}$ and ${{1}\over{8}}$. This shows
that condition (C1) does not always imply $T_0$-separation.
\end{example}

\begin{example}\label{example-()}
Let $X= (0,1)$ and consider the nest $\cL = \{(0,a): a \in \mathbb{R}, {{1}\over{2}} \le a <1\}$, on $X$. We remark
that condition (C2) is satisfied, but condition (C3) is not satisfied. This is because for each
$L \in \cL$, $\sup L = a \notin L$; this shows that (C2) is satisfied. But we also
see that there does not exist $L \in \cL$, such that $\sup L = {{1}\over{4}} \in X-L$. This counterexample
shows that (C2) does not always imply (C3) and also (C2) does not always imply that $\cL$ is $T_0$-separating. Indeed,
there does not exist $L \in \cL$ that $T_0$-separates ${{1}\over{4}}$ and ${{1}\over{8}}$.
\end{example}

\begin{remark}
The results in both Examples \ref{example-()} and \ref{example-(]} permit us to make some conclusions
on the connection between $T_0$-separating nests and linear orders. It follows from the definition
of nest and $T_0$-separation that
a nest is $T_0$-separating, if and only if $\triangleLq$ is a linear order.
Why isn't the nest $\cL$, in both of the above examples of subsets of the real line, \ref{example-()} and \ref{example-(]},
$T_0$-separating? The answer lies on the fact that the elements of the nest $\cL$
must satisfy a bijection with the elements of the set $X$, something that does not happen in our examples. So,
the set $X$, in Examples \ref{example-()} and \ref{example-(]} is not linearly ordered via $\triangleLq$.
\end{remark}

\begin{proposition}
Let $<$ be a linear order on a set $X$ and let $\cL_< = \{(-\infty,a) : a \in X\}$ be a nest on $X$, such
that  Let $|\cL| = |X|$. Then, $\cL$ $T_0$-separates $X$.
\end{proposition}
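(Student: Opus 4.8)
The plan is to prove $T_0$-separation directly from Definition \ref{definition-T0-T1}, with the totality of $<$ as the single essential ingredient. First I would fix notation: for $a \in X$ write $(-\infty,a) = \{z \in X : z < a\}$, so that $\cL_< = \{(-\infty,a) : a \in X\}$ literally consists of the strict initial segments of $X$. Then I would take arbitrary distinct $x,y \in X$ and use linearity to split into the two exhaustive and mutually exclusive cases $x < y$ and $y < x$.

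In the case $x < y$ I would take $L = (-\infty,y)$, which lies in $\cL_<$ by definition. One checks immediately that $x \in L$, since $x < y$, and that $y \notin L$, since $y \not< y$ by irreflexivity of the strict order; hence $L$ is the required separating member. The case $y < x$ is handled symmetrically by $L = (-\infty,x)$. As $x,y$ were arbitrary distinct points, this shows that $\cL_<$ $T_0$-separates $X$.

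Concerning the hypothesis $|\cL| = |X|$, I expect it to play no role in the separation step itself: the witness $(-\infty,y)$ is available for every $y \in X$ no matter how many distinct initial segments occur. Its function is rather to make precise the remark preceding the proposition, that the members of a separating nest stand in bijection with the points of $X$. In fact the map $a \mapsto (-\infty,a)$ is automatically injective once $<$ is a strict linear order -- if $x < y$ then $x \in (-\infty,y)$ while $x \notin (-\infty,x)$, so the two segments differ -- so $|\cL| = |X|$ merely records this bijection rather than imposing a further restriction. I would therefore present it as confirming, not enabling, the argument.

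As an alternative routing I could invoke the earlier remark that a nest is $T_0$-separating precisely when $\triangleleft_{\cL}$ is a linear order, and verify $\triangleleft_{\cL_<} = {<}$: the inclusion ${<} \subseteq \triangleleft_{\cL_<}$ is exactly the computation above, while for the reverse, $x \triangleleft_{\cL_<} y$ yields some $(-\infty,a)$ with $x \in (-\infty,a)$ and $y \notin (-\infty,a)$, i.e. $x < a$ and $y \not< a$; totality forces $a \le y$, so $x < a \le y$ gives $x < y$. I do not anticipate a genuine obstacle here. The only point demanding care is the appeal to irreflexivity guaranteeing $y \notin (-\infty,y)$: this is precisely where strictness of $<$ (rather than reflexivity) enters, and it is what rules out the ``missing separator'' phenomenon illustrated in Examples \ref{example-(]} and \ref{example-()}.
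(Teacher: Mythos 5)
Your proof is correct, and in fact there is nothing in the paper to compare it against: the paper states this proposition bare, with no proof environment following it (presumably viewing it as immediate from the preceding remark about the bijection between nest elements and points), so your argument fills a genuine gap rather than duplicating one. The two-case argument is exactly right: for distinct $x,y$, linearity gives $x<y$ or $y<x$, and the segment $(-\infty,y)$ (resp. $(-\infty,x)$) separates, with irreflexivity of the strict order guaranteeing $y \notin (-\infty,y)$. Your side remarks are also accurate and worth retaining. The hypothesis $|\cL|=|X|$ (whose phrasing in the statement is itself garbled, ``such that Let $|\cL|=|X|$'') is indeed redundant: for a strict linear order the map $a \mapsto (-\infty,a)$ is automatically injective, since $a<b$ implies $a \in (-\infty,b)$ but $a \notin (-\infty,a)$, so the cardinality condition records a fact rather than imposing one, and the separation argument never uses it. Your alternative route, verifying $\triangleleft_{\cL_<} = {<}$ and then invoking the paper's earlier remark that a nest is $T_0$-separating exactly when $\triangleleft_{\cL}$ is linear, is equally valid, though note that that remark is itself asserted in the paper without proof, so the direct two-case argument is the more self-contained of your two options.
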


\begin{example}\label{example-T0 not nec C3}
Let $X = \{a,b\}$ and consider the nest $\cL = \{\{a\}\}$, on $X$. We remark that $\cL$ is $T_0$-separating.
Indeed, since $a \neq b$, there exists $L = \{a\} \in \cL$, such that $a \in \{a\}$ and $b \notin \{a\}$. We
remark that (C3) is not satisfied though. Indeed, for $b \in X$, there does not exist $L \in \cL$, such that $\sup L = b$.
We observe that $L = \{a\} \in \cL$ and that $\sup L = a$.
\end{example}

\begin{example}\label{example-T0 not nec C2}
Consider $X = \mathbb{R}$ and the nest $\cL = \{(-\infty,a]: a \in \cR\}$, on $\mathbb{R}$. One
can easily see that $\cL$ $T_0$-separates $\mathbb{R}$. But, for each $L \in \cL$, we have that
$\sup (-\infty,a]=a \in L$. So, property (C2) is not satisfied. With this example we see that the $T_0$-separation
property of $\cL$ does not necessarily imply property $(C2)$.
\end{example}

\begin{example}\label{example-T0 not nec C1}
Let $X=\mathbb Q$.  For each $r\in \mathbb R$, let $L_r=(-\infty,r)\cap X$ and let $\mathcal L=\{L_r:r\in \mathbb R\}$.  Certainly $\mathcal L$ is $T_0$-separating and $\mathcal L$ generates the usual order on $\mathbb Q$.  But $L_{\sqrt 2}$ does not have a supremum in $X$.
\end{example}

We will now prove that property (C3) implies the $T_0$-separation of $\cL$.

\begin{proposition}\label{proposition-C3 implies T0}
Let $X$ be a set and let $\cL$ be a nest on $X$ that satisfies property (C3). Then, $\cL$ $T_0$-separates $X$.
\end{proposition}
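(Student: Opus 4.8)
The plan is to take two arbitrary distinct points $x,y\in X$ and exhibit a single member of $\cL$ that contains exactly one of them. Property (C3) is built for exactly this: applied to $x$ it produces $L_x\in\cL$ with $\sup L_x=x$ and $x\notin L_x$, and applied to $y$ it produces $L_y\in\cL$ with $\sup L_y=y$ and $y\notin L_y$. Since $\cL$ is a nest, $L_x$ and $L_y$ are comparable under inclusion, so I would assume without loss of generality that $L_x\s L_y$; the alternative $L_y\s L_x$ is handled by the symmetric argument and separates $x,y$ in the opposite direction. My target is then to prove $x\in L_y$, which, combined with $y\notin L_y$, separates the pair.

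Before the main step I would record a short preliminary: the strict relation $\triangleL$ is asymmetric on a nest. If $x\in L$, $y\notin L$ and simultaneously $y\in M$, $x\notin M$ for $L,M\in\cL$, then comparability of $L$ and $M$ forces either $x$ or $y$ into a set excluding it, a contradiction. Hence $\triangleLq$ is a genuine partial order and suprema are unique. This is precisely what lets me exclude the case $L_x=L_y$: equality would give $\sup L_x=\sup L_y$, that is $x=y$, contrary to $x\neq y$. Thus in fact $L_x\subsetneq L_y$, and I may fix a witness $z\in L_y\setminus L_x$.

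The heart of the proof is to relocate $x$ into $L_y$ by means of the supremum property. First I would verify that $z$ is an upper bound of $L_x$ with respect to $\triangleLq$: for each $\ell\in L_x$, the set $L_x$ itself witnesses $\ell\triangleL z$, since $\ell\in L_x$ while $z\notin L_x$. As $x=\sup L_x$ is the least upper bound, this yields $x\triangleLq z$. I then transfer this back to membership: if $x=z$ then $x\in L_y$ at once, and if $x\triangleL z$ there is some $L\in\cL$ with $x\in L$, $z\notin L$; comparability within the nest rules out $L_y\s L$ (which would force the excluded $z\in L$), so $L\s L_y$ and hence $x\in L_y$. In either case $x\in L_y$ while $y\notin L_y$, so $\cL$ $T_0$-separates $x$ and $y$, and since the pair was arbitrary the conclusion follows.

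I expect the genuinely delicate point to be the implication ``$x\triangleLq z$ together with $z\in L_y$ gives $x\in L_y$,'' because $\triangleLq$ is merely the order generated by $\cL$ and carries no independent topological content; it is exactly here that the nest hypothesis of total inclusion does the work, by forbidding the pattern $L_y\s L$. The remaining ingredients are routine bookkeeping: the asymmetry check that legitimizes uniqueness of suprema, and the symmetric treatment of the case $L_y\s L_x$.
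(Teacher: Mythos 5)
Your proof is correct, and it opens exactly as the paper's does: apply (C3) at each of $x$ and $y$ to obtain $L_x,L_y\in\cL$ with $\sup L_x=x$ and $\sup L_y=y$, then use the nest dichotomy to assume $L_x\subseteq L_y$. The difference is in how the two arguments close. The paper finishes in one line: $L_x\subseteq L_y$ makes $\sup L_y$ an upper bound of $L_x$, so leastness gives $x\triangleLq y$, and since $x\neq y$ this is $x\triangleL y$, which is $T_0$-separation by the very definition of $\triangleL$; note this never uses the clause $\sup L\in X-L$, and it does not name a specific separating set. You instead prove the sharper fact that $L_y$ itself separates the pair: you first verify asymmetry of $\triangleL$ on a nest (hence uniqueness of suprema) to rule out $L_x=L_y$, then take a witness $z\in L_y\setminus L_x$, observe that $L_x$ itself witnesses $\ell\triangleL z$ for every $\ell\in L_x$, deduce $x\triangleLq z$ by leastness, and convert this order relation back into the membership $x\in L_y$ by a second appeal to comparability; your final separation then uses $y\notin L_y$, i.e. the (C2)-clause built into (C3). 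What your route buys is an explicit separating set and a self-contained treatment of uniqueness of suprema, a point the paper leaves implicit; what the paper's route buys is brevity, since comparing the two suprema directly avoids both the auxiliary witness $z$ and the passage back from the order $\triangleLq$ to set membership.
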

\begin{proof}
Let $x,y \in X$, such that $x \neq y \in X$. By (C3), there exists $L_x \in \cL$, such that $\sup L_x = x$ and there also
exists $L_y \in \cL$, such that $\sup L_y = y$. Since $\cL$ is a nest on $X$, we have that either $L_x \subset L_y$
or $L_y \subset L_x$. If $L_x \subset L_y$, then $\sup L_x \triangleLq \sup L_y$, which implies that $x \triangleL y$.
If $L_y \subset L_x$, we have that $\sup L_y \triangleLq \sup L_x$, which implies that $y \triangleL x$. So, either $x \triangleL y$ or $y \triangleL x$, proving that $\cL$ $T_0$-separates $X$.
\end{proof}

\begin{lemma}\label{lema-lemmas 1 and 2}
Let $X$ be a set and let $\cL \subset \cP(X)$ be a nest.

\begin{enumerate}
\item If condition (C1) is satisfied and $\sup L = k$,
then $L \supset X-\uparrow{k}$.

\item If condition (C2) is satisfied and $\sup L = k$,
then $L \subset X-\uparrow{k}$.

\end{enumerate}
\end{lemma}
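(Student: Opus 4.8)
The plan is to translate everything into the nest-induced order and then lean on the defining property of the least upper bound. Recall that $x\in\uparrow k$ exactly when $k$ sits below $x$ in the generated order, so membership in $X-\uparrow k$ says that $k$ is \emph{not} below $x$; it is cleanest to record at the outset the identification $X-\uparrow k=\{x\in X:\ \lnot(k\triangleLq x)\}$. Two structural facts will then do all the work. First, if $x\notin L$ then for every $z\in L$ the set $L$ itself witnesses $z\triangleL x$ (since $z\in L$ and $x\notin L$), so every such $x$ is a $\triangleLq$-upper bound of $L$. Second, the nest hypothesis forces $\triangleL$ to be asymmetric: were $p\triangleL q$ and $q\triangleL p$ to hold simultaneously, with witnesses $M,N\in\cL$, comparability of $M$ and $N$ (they form a nest) would put $p$ or $q$ into a set from which it was excluded, a contradiction. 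Together with transitivity (Corollary \ref{corollary-transitivity}) this makes $\triangleLq$ a genuine partial order, so that $\sup L$ in (C1) and (C2) is unambiguous.

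For part (1) I would argue by contraposition. Assume (C1), let $k=\sup L$, and take any $x\notin L$. By the first structural fact $x$ is a $\triangleLq$-upper bound of $L$, and since $k$ is the \emph{least} such upper bound we get $k\triangleLq x$, i.e. $x\in\uparrow k$. Hence every element outside $L$ lies in $\uparrow k$, which is precisely $X-\uparrow k\subseteq L$, the desired inclusion $L\supset X-\uparrow k$. Note that this direction uses only that $k$ is a least upper bound, never that $k\in X-L$; this is why (C1) alone suffices here.

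For part (2) I would argue directly. Assume (C2), so $k=\sup L$ with $k\in X-L$, and fix $z\in L$. Since $k$ is an upper bound of $L$ we have $z\triangleLq k$; because $k\notin L$ while $z\in L$ we have $k\ne z$, and therefore $z\triangleL k$ strictly. Asymmetry of $\triangleL$ then rules out $k\triangleL z$, and combined with $k\ne z$ this yields $\lnot(k\triangleLq z)$, i.e. $z\in X-\uparrow k$. As $z\in L$ was arbitrary, $L\subseteq X-\uparrow k$.

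The step I expect to be delicate is the boundary behaviour at $k$ itself, and correspondingly the precise reading of $\uparrow k$. The least--upper--bound property yields only the non-strict relation $k\triangleLq x$ for $x\notin L$, so part (1) really requires $\uparrow k$ to be read as $\{x:\ k\triangleLq x\}$ (the reflexive ``$\ge k$''); under a strictly-below reading the point $x=k$, which can lie outside $L$ already under (C1), would defeat the inclusion. This is also exactly where the gap between (C1) and (C2) surfaces: (C2) is what guarantees $k\notin L$, i.e. that the supremum is attained \emph{outside} $L$, and this is used only in part (2) to upgrade $z\triangleLq k$ to the strict $z\triangleL k$. I would therefore pin down the identification $X-\uparrow k=\{x:\lnot(k\triangleLq x)\}$ before doing anything else, so that both inclusions read off immediately from the $\sup$ property and the asymmetry of the nest-induced order.
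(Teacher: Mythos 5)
Your proof is correct and follows essentially the same route as the paper's: for part (1) you observe that every point outside $L$ is automatically a $\triangleLq$-upper bound of $L$ (witnessed by $L$ itself) and invoke leastness of $\sup L$, and for part (2) you obtain the strict relation $z \triangleL k$ from $k \notin L$ and then kill $k \triangleLq z$ by the asymmetry of the nest-induced order, which is exactly the nest-comparability argument the paper relegates to a footnote. Your explicit insistence on reading $\uparrow{k}$ reflexively as $\{x : k \triangleLq x\}$ is a sharper statement of a point the paper leaves implicit (its own definition of $\uparrow$ is strict, yet its proof of part (1) concludes $y \in \uparrow{k}$ from $k \triangleLq y$), so this is a welcome clarification rather than a different approach.
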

\begin{proof}
1. Let $L \in \cL$ and let $k = \sup L \in X$. Then, for each $x \in L$, $x \triangleLq k$.
Let $y \in X-L$. Since $x \in L$ and $y \notin L$, we have that $x \triangleL y$, for each $x$. So, $k \triangleLq y$, and so $y \in \uparrow{k}$.
Thus, for each $y \in X-L$, we have that $y \in \uparrow{k}$. The latter gives that $X-L \subset \uparrow{k}$,
which implies that $L \supset X- \uparrow{k}$.

2. For each $x \in L$, we have $x \triangleL k$, so $k \ntrianglelefteq x$ \footnote{Indeed, if $k=x$ we get a contradiction.
If $x \triangleL k$, then there exists $L_1 \in \cL$, such that $x \in L_1$ and $k \notin L_1$. If $k \triangleL x$,
then there exists $L_2 \in \cL$, such that $k \in L_2$ and $x \notin L_2$. But $\cL$ is a nest. If $L_1 \subset L_2$,
 then $x \notin L_1$ and $x \in L_1$, a contradiction. If $L_2 \subset L_1$ we get a contradiction in a similar way.}, which implies that $x \in X- \uparrow{k}$.
Thus, $L \subset X - \uparrow{k}$.
\end{proof}

From now on, $\cT_{\cL}$ will denote the topology generated by the nest $\cL$, on $X$, and
$\cT_l$ the lower topology on $X$.

\begin{proposition}\label{proposition-lemmas 1 and 2}
Let $X$ be a set and let $\cL \subset \cP(X)$ be a nest. If condition (C2) is satisfied, then:
\begin{enumerate}

\item $L = X-\uparrow{k}$, where $k = \sup L$, with respect to $\triangleL$, for each $L \in \cL$.

\item $\cT_{\cL} \subset \cT_l$.
\end{enumerate}
\end{proposition}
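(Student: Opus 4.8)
The plan is to obtain part 1 directly from Lemma \ref{lema-lemmas 1 and 2} by combining its two inclusions, and then to deduce part 2 from part 1 together with the definition of the lower topology.

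For part 1, I would first observe that condition (C2) implies condition (C1), as recorded in Proposition \ref{proposition-properties}; hence both halves of Lemma \ref{lema-lemmas 1 and 2} are at my disposal. Fixing $L \in \cL$ and setting $k = \sup L$ with respect to $\triangleLq$, which exists and lies in $X - L$ by (C2), part 1 of the lemma (requiring only (C1)) gives $L \supset X - \uparrow{k}$, while part 2 of the lemma (requiring (C2)) gives $L \subset X - \uparrow{k}$. The two inclusions together yield $L = X - \uparrow{k}$, as claimed.

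For part 2, recall that $\cL$ is a subbase for $\cT_{\cL}$ and that $\cT_l$ is generated by the subbase $\{X - \uparrow{x} : x \in X\}$. By part 1, every $L \in \cL$ equals $X - \uparrow{k}$ with $k = \sup L \in X$, so each member of the subbase $\cL$ is itself a subbasic open set of $\cT_l$ and therefore belongs to $\cT_l$. Since every member of a subbase generating $\cT_{\cL}$ lies in $\cT_l$, and a topology is closed under finite intersections and arbitrary unions, each $\cT_{\cL}$-open set, being a union of finite intersections of members of $\cL$, also lies in $\cT_l$. Hence $\cT_{\cL} \subset \cT_l$.

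I do not expect a serious obstacle here: the substantive work was already carried out in Lemma \ref{lema-lemmas 1 and 2}, and what remains is essentially bookkeeping. The only points calling for care are to invoke the implication (C2) $\Rightarrow$ (C1) before applying the first half of the lemma, and to check that $k = \sup L$ genuinely belongs to $X$, so that $X - \uparrow{k}$ is a legitimate member of the subbase defining $\cT_l$; both are guaranteed by (C2).
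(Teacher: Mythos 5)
Your proposal is correct and follows the paper's own argument: part 1 by combining the two inclusions of Lemma \ref{lema-lemmas 1 and 2}, and part 2 by observing that each $L \in \cL$ is then a subbasic open set of $\cT_l$, so the topology generated by $\cL$ is contained in $\cT_l$. The only difference is presentational: you make explicit the implication (C2) $\Rightarrow$ (C1) and the closure of $\cT_l$ under finite intersections and unions, details the paper leaves tacit.
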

\begin{proof}
1. follows by Lemma \ref{lema-lemmas 1 and 2}.

2. $\cT_l$ is of the form $\mathcal{S} = \{X-\uparrow{k}: k \in X\}$. Let $L \in \cL$. Part 1.
gives that $L = X - \uparrow{k}$, so $L \in \cT_l$ and the result follows.
\end{proof}

\begin{theorem}\label{theorem-1 and 2}
Let $X$ be a set and let $\cL \subset \cP(X)$ be a nest on $X$, such that condition (C3) is satisfied. Then, $\cT_{\cL} = \cT_l$.
\end{theorem}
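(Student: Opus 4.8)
The plan is to prove the two inclusions $\cT_{\cL} \subset \cT_l$ and $\cT_l \subset \cT_{\cL}$ separately, and the first of them is already essentially free. Since (C3) implies (C2) by Proposition \ref{proposition-properties}, the hypothesis of Proposition \ref{proposition-lemmas 1 and 2} is met, and part (2) of that proposition gives $\cT_{\cL} \subset \cT_l$ outright. So no further work is needed for this direction; the content of the theorem lies entirely in the reverse inclusion.

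For $\cT_l \subset \cT_{\cL}$ I would argue at the level of subbases. Recall that $\cT_l$ is generated by the subbase $\mathcal{S} = \{X - \uparrow{k} : k \in X\}$. Since $\cT_l$ is the smallest topology containing $\mathcal{S}$, it suffices to show $\mathcal{S} \subset \cT_{\cL}$, i.e. that each $X - \uparrow{k}$ is already an element of $\cT_{\cL}$; then $\cT_{\cL}$, being a topology containing $\mathcal{S}$, must contain $\cT_l$. To verify this, fix $k \in X$ and invoke the surjectivity clause of (C3): there exists $L \in \cL$ with $\sup L = k$ and $k \in X - L$. Applying Proposition \ref{proposition-lemmas 1 and 2}(1) to this particular $L$ (legitimate because (C2) holds under (C3)) yields $L = X - \uparrow{\sup L} = X - \uparrow{k}$. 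Hence $X - \uparrow{k} = L \in \cL \subset \cT_{\cL}$, which is exactly what we want.

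Combining the two inclusions then gives $\cT_{\cL} = \cT_l$. The only substantive input is Proposition \ref{proposition-lemmas 1 and 2}, which converts the supremum data into the set identity $L = X - \uparrow{\sup L}$; once that identification is in hand, the remaining argument is bookkeeping. The single point that requires care — and where one could easily slip — is recognizing that it is precisely the extra clause of (C3) beyond (C2), namely that \emph{every} element of $X$ arises as $\sup L$ for some $L \in \cL$ with that supremum lying outside $L$, that is needed to reach every subbasic set $X - \uparrow{k}$ of the lower topology. Condition (C2) alone gives only the forward inclusion, so this surjectivity is exactly the gap that (C3) fills.
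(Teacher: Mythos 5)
Your proof is correct and follows essentially the same route as the paper's: both directions rest on Proposition \ref{proposition-lemmas 1 and 2}, with the forward inclusion coming from its part (2) (via (C3) $\Rightarrow$ (C2)) and the reverse inclusion obtained by using the surjectivity clause of (C3) to realize each subbasic set $X - \uparrow{k}$ of $\cT_l$ as an element $L \in \cL$ via the identity $L = X - \uparrow{\sup L}$. If anything, your write-up is slightly more careful than the paper's, which contains a typo in its final line (writing $\cT_{\cL} \subset \cT_l$ where $\cT_l \subset \cT_{\cL}$ is meant) and leaves the subbase-level reduction implicit.
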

\begin{proof}
Proposition \ref{proposition-lemmas 1 and 2} gives that $\cT_{\cL} \subset \cT_l$. We now
consider a subbasic open set of $\cT_l$ of the form $X- \uparrow{x}$. Then, there exists
$L \in \cL$, such that $\sup L = x$. But, according to Proposition \ref{proposition-lemmas 1 and 2}, $L = X - \uparrow{x}$.
So, $\cT_{\cL} \subset \cT_l$ and the statement of the theorem follows.
\end{proof}

\begin{remark}\label{remark-on L and R}
Let $\cL$ be a nest on a set $X$. Let $\cR$ be another nest on $X$, such that there exists
a mapping from $\cL$ to $\cR$, so that $x \triangleL y$, if and only
if $y \trangleR x$. So, $x \triangleL y$, if and only if there exists $L \in \cL$, such that
$x \in L$ and $y \notin L$, if and only if there exists $R \in \cR$, such that $y \in R$ and $x \notin R$.

Note that we do not demand from $\cL \cup \cR$
to form a $T_1$-separating subbase for $X$;  so neither $\cL$ nor $\cR$
will necessarily $T_0$-separate $X$. We keep only the dual order-theoretic properties of these two nests, but
we do not necessarily keep the property that restricts them on a line. So, we are now able to rewrite for $\cR$,
in a dual way, the properties that hold for $\cL$.
\end{remark}

\begin{definition}\label{definition-binary}
Let $X$ be a set and let $\cL$ and $\cR$ be two nests on $X$, that satisfy the properties
of Remark \ref{remark-on L and R}. We call such nests {\em dual nests}. $\cL$ will be
called {\em dual to} $\cR$ and $\cR$ dual to $\cL$.
\end{definition}

Let $X$ be a set and let $\cR$ be dual to the nest $\cL$, where $\cL$ satisfies
properties (C1),(C2),(C3). In a similar fashion, we define the following properties
for $\cR$:

\begin{enumerate}

\item[(C1)*] For each $R \in \cR$, there exists $\sup R$ with respect to $\trangleRq$. \\
(Equivalently, for each $R \in \cR$, there exists $\inf R$ with respect to $\triangleLq$.)

\item[(C2)*] For each $R \in \cR$, there exists $\sup R$ with respect to $\trangleRq$, such that $\sup R \in X-R$.\\
(Equivalently, for each $R \in \cR$ there exists $\inf R$ with respect to $\triangleLq$, such that $\inf R \in X-R$).

\item[(C3)*] For each $x \in X$, there exists $R \in \cR$, such that there exists $\sup R \in X-R$ with respect to $\trangleRq$ and also property (C2)* holds.\\
(Equivalently, for each $x \in X$, there exists $R \in \cR$, such that there exists $\inf R \in X-R$, with respect to $\triangleLq$ and also property (C2)* holds).

\end{enumerate}

One easily observes that Proposition \ref{proposition-properties} holds, too, if we substitute
(C1)*, (C2)*, (C3)* in the place of (C1), (C2),(C3), respectively.

Proposition \ref{proposition-lemmas 1 and 2} can be also stated with respect to $\cR$ in a dual way.

\begin{proposition}\label{binary proposition-lemmas 1 and 2}
Let $X$ be a set and let $\cR \subset \cP(X)$ be a nest. If condition (C2)* is satisfied, then:
\begin{enumerate}

\item $R = X-\uparrow{k}$, where $k = \sup R$ with respect to $\trangleRq$ for each $R \in \cR$ (or, equivalently,
$R= X-\downarrow{k}$, where $k= \inf R$ with respect to $\triangleLq$).

\item $\cT_{\cR} \subset \cT_U$.
\end{enumerate}
\end{proposition}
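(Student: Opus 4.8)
The plan is to obtain this as the order-reversal dual of Proposition \ref{proposition-lemmas 1 and 2}, using that nothing in that proposition is special to the letter $\cL$: it holds for any nest paired with the order it generates. First I would apply Proposition \ref{proposition-lemmas 1 and 2} verbatim to the nest $\cR$ with its generated order $\trangleR$. Reading condition (C2) for the pair $(\cR,\trangleR)$ gives precisely condition (C2)* (the existence of $\sup R$ in the $\cR$-order lying in $X-R$), and the paper has already noted that Proposition \ref{proposition-properties}, and hence the implication (C2)*$\Rightarrow$(C1)*, transcribes to the starred conditions. So the hypothesis of the applied proposition is exactly (C2)*.

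The applied proposition then returns two facts: that $R = X - \uparrow{k}$ with $k = \sup R$ computed in the $\cR$-order, and that $\cT_{\cR}$ is contained in the lower topology of the $\cR$-order. I would finish by translating both through the duality $x \triangleL y \Leftrightarrow y \trangleR x$ of Remark \ref{remark-on L and R}. Under this reversal the up-set $\uparrow{k}$ of the $\cR$-order equals the down-set $\downarrow{k}$ of the $\cL$-order, and $\sup R$ in the $\cR$-order equals $\inf R$ with respect to $\triangleLq$, which converts the first fact into the two equivalent forms $R = X - \uparrow{k}$ and $R = X - \downarrow{k}$ asserted in part 1. The same reversal sends the lower topology of the $\cR$-order to the upper topology $\cT_U$ of the $\cL$-order (whose subbase is $\{X - \downarrow{x} : x \in X\}$), turning the second fact into $\cT_{\cR} \subset \cT_U$, which is part 2.

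The only step needing genuine care — and the one I would verify most carefully — is this translation dictionary: that, under $x \triangleL y \Leftrightarrow y \trangleR x$, suprema in the $\cR$-order coincide with infima with respect to $\triangleLq$, $\cR$-order up-sets coincide with $\cL$-order down-sets, and the lower topology of the $\cR$-order is literally the upper topology $\cT_U$. Once these identifications are pinned down the result is immediate; alternatively, if one prefers not to invoke the duality at the level of topologies, the identity $R = X - \downarrow{k}$ can be re-derived directly by mirroring the two inclusions of Lemma \ref{lema-lemmas 1 and 2} with the roles of $\sup$/$\uparrow$ and $\inf$/$\downarrow$ interchanged, after which part 2 follows since every $R \in \cR$ is then a subbasic open set of $\cT_U$.
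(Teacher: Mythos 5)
Your proof is correct and is essentially the paper's own argument: the paper gives no explicit proof here, merely asserting the proposition as the dual restatement of Proposition \ref{proposition-lemmas 1 and 2}, which is precisely the duality you spell out. Applying Proposition \ref{proposition-lemmas 1 and 2} verbatim to the nest $\cR$ with its generated order (so that (C2) becomes (C2)*) and then translating $\cR$-order up-sets, suprema and lower topology into $\cL$-order down-sets, infima and the upper topology $\cT_U$ via $x \triangleL y \Leftrightarrow y \trangleR x$ is exactly the intended justification.
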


In a similar way, we can restate Theorem \ref{theorem-1 and 2}, with respect to $\cR$.

\begin{theorem}\label{binary theorem-1 and 2}
Let $X$ be a set and let $\cR \subset \cP(X)$ be a nest on $X$, such that condition (C3)* is satisfied. Then $\cT_{\cR} = \cT_U$.
\end{theorem}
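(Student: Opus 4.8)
The plan is to mirror the proof of Theorem \ref{theorem-1 and 2} in its entirety, simply transposing every statement into its dual form, since (C3)* is precisely the dual of (C3) and $\cT_U$ is the dual of $\cT_l$. I would establish the two inclusions $\cT_{\cR} \subset \cT_U$ and $\cT_U \subset \cT_{\cR}$ separately and then combine them.

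For the first inclusion I would invoke Proposition \ref{binary proposition-lemmas 1 and 2} directly. Since (C3)* implies (C2)* (the dual of the first item of Proposition \ref{proposition-properties}, which we are told carries over verbatim to the starred conditions), part 2 of Proposition \ref{binary proposition-lemmas 1 and 2} immediately yields $\cT_{\cR} \subset \cT_U$, so no further work is needed here.

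For the reverse inclusion I would take an arbitrary subbasic open set of $\cT_U$, which by definition has the form $X - \downarrow x$ for some $x \in X$. Using the equivalent inf-based formulation of (C3)*, there exists $R \in \cR$ with $\inf R = x$ with respect to $\triangleLq$ and $x \in X - R$. Part 1 of Proposition \ref{binary proposition-lemmas 1 and 2}, in its equivalent form $R = X - \downarrow k$ with $k = \inf R$, then gives $R = X - \downarrow x$. Hence every subbasic generator of $\cT_U$ is itself a member of $\cR$ and therefore lies in $\cT_{\cR}$, so $\cT_U \subset \cT_{\cR}$. Combining the two inclusions gives $\cT_{\cR} = \cT_U$.

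I expect no genuine obstacle, because the argument is a purely formal dualization of the proof already given. The single point requiring care is bookkeeping: the subbase for $\cT_U$ is phrased in terms of $\downarrow x$, so throughout the reverse inclusion I must consistently use the inf-with-respect-to-$\triangleLq$ formulation of both (C3)* and Proposition \ref{binary proposition-lemmas 1 and 2}, rather than the sup-with-respect-to-$\trangleRq$ formulation. Mixing the two would misalign the direction of the order and break the identification $R = X - \downarrow x$ on which the whole argument rests.
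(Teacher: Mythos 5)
Your proposal is correct and takes essentially the same approach as the paper: the paper gives no separate proof of this theorem, stating only that it follows ``in a similar way'' from Theorem \ref{theorem-1 and 2}, and your argument is precisely that dualization, using Proposition \ref{binary proposition-lemmas 1 and 2} for the inclusion $\cT_{\cR} \subset \cT_U$ and the inf-formulation of (C3)* to identify each subbasic set $X - \downarrow{x}$ with a member of $\cR$. Your closing remark about consistently using the $\inf$-with-respect-to-$\triangleLq$ formulation is exactly the right bookkeeping point.
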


We can now sum up Theorems \ref{theorem-1 and 2} and \ref{binary theorem-1 and 2}, in the
following theorem.

\begin{theorem}\label{theorem-conclusive}
Let $X$ be a set and let $\cL$ and $\cR$ be two dual nests on $X$.
\begin{enumerate}
\item If $\cL$ satisfies
(C2) and if $\cR$ satisfies (C2)*, then $\cT_{\cL \cup \cR} \subset \cT_{in}^\cL$.

\item If $\cL$ satisfies (C3) and if $\cR$ satisfies (C3)*, then $\cT_{\cL \cup \cR} = \cT_{in}^\cL$.

\end{enumerate}
\end{theorem}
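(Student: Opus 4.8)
The plan is to reduce both parts to a single structural fact: a topology generated by the union of two subbases is the join (the supremum in the lattice of topologies) of the two topologies generated separately. Since $\cL \cup \cR$ is by definition a subbase for $\cT_{\cL \cup \cR}$, while $\cL$ generates $\cT_{\cL}$ and $\cR$ generates $\cT_{\cR}$, the smallest topology containing $\cL \cup \cR$ coincides with the smallest topology containing $\cT_{\cL} \cup \cT_{\cR}$. The first step, therefore, is to record the identity
\[\cT_{\cL \cup \cR} = \cT_{\cL} \vee \cT_{\cR},\]
whose verification is the usual two-way inclusion ($\cL \cup \cR \subset \cT_{\cL} \vee \cT_{\cR}$ gives one direction, while $\cT_{\cL}, \cT_{\cR} \subset \cT_{\cL \cup \cR}$ gives the other). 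I would also note that the join is monotone: $\cT_1 \subset \cT_1'$ and $\cT_2 \subset \cT_2'$ imply $\cT_1 \vee \cT_2 \subset \cT_1' \vee \cT_2'$, since the right-hand side is a topology containing both $\cT_1$ and $\cT_2$.

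For part 1, I would then invoke Proposition \ref{proposition-lemmas 1 and 2}, which yields $\cT_{\cL} \subset \cT_l$ from (C2), together with its dual Proposition \ref{binary proposition-lemmas 1 and 2}, which yields $\cT_{\cR} \subset \cT_U$ from (C2)*. Monotonicity of the join then gives
\[\cT_{\cL \cup \cR} = \cT_{\cL} \vee \cT_{\cR} \subset \cT_l \vee \cT_U = \cT_{in}^{\cL},\]
which is the desired inclusion.

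For part 2, the argument is identical but upgraded to equalities. Theorem \ref{theorem-1 and 2} gives $\cT_{\cL} = \cT_l$ from (C3), and its dual Theorem \ref{binary theorem-1 and 2} gives $\cT_{\cR} = \cT_U$ from (C3)*; substituting into the join identity yields
\[\cT_{\cL \cup \cR} = \cT_{\cL} \vee \cT_{\cR} = \cT_l \vee \cT_U = \cT_{in}^{\cL}.\]

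The one point requiring care --- and the main, if mild, obstacle --- is ensuring that the lower topology $\cT_l$ and the upper topology $\cT_U$ appearing above are taken with respect to the \emph{same} order $\triangleL$, so that their join is genuinely the interval topology $\cT_{in}^{\cL}$. For $\cT_l$ this holds by construction. For $\cT_U$ I would appeal to the equivalent formulation inside Proposition \ref{binary proposition-lemmas 1 and 2}, where each $R \in \cR$ is written as $R = X - \downarrow{k}$ with $k = \inf R$ computed with respect to $\triangleLq$; this is exactly a subbasic open set of the upper topology relative to $\triangleL$. Thus the duality hypothesis linking $\cL$ and $\cR$ is precisely what makes the two one-sided topologies compatible, and once this is observed the theorem follows at once from the cited results.
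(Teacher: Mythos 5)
Your proof is correct and takes essentially the same route as the paper, which states this theorem as a direct summing-up of Theorem \ref{theorem-1 and 2} and Theorem \ref{binary theorem-1 and 2} (with Propositions \ref{proposition-lemmas 1 and 2} and \ref{binary proposition-lemmas 1 and 2} supplying the (C2)/(C2)* inclusions), exactly as you assemble them through the join identity $\cT_{\cL \cup \cR} = \cT_{\cL} \vee \cT_{\cR}$. Your closing observation that the duality of $\cL$ and $\cR$ is what makes $\cT_U$ a one-sided topology for the \emph{same} order $\triangleL$ is a detail the paper leaves implicit, but it is the same argument.
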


As we can see in the two examples that follow, the conditions of statements 1. and 2. from Theorem \ref{theorem-conclusive}
are sufficient but not necessary.

\begin{example}
Let $X = \{x_1,x_2\}$ and let $\cL = \{\{x_1\}\}$. Then, $\cT_\cL = \{\{x_1\},\{x_1,x_2\},\emptyset\}$ is
the topology on $X$ which is generated by $\cL$. We observe that $x_1 \triangleL x_2$. Then,
$\uparrow{x_1}= \{x_1,x_2\}$, $X-\uparrow{x_1} = \emptyset$, $\uparrow{x_2}= \{x_2\}$ and $X-\uparrow{x_2}= \{x_1\}$.
So, the lower topology $\cT_l = \{\emptyset,\{x_1\},\{x_1,x_2\}\} = \cT_\cL$. Now, we define $\cR = \{\{x_2\}\}$
and $x_2 \triangleR x_1$, if and only if there exists $R \in \cR$, such that $x_2 \in R$ and $x_1 \notin R$.
So, $x_1 \triangleL x_2$ if and only if $x_2 \triangleR x_1$. Then, $\cT_\cR = \{\{x_2\},\{x_1,x_2\}\}$ is
the topology on $X$ which is induced by $\cR$. Also, $\downarrow{x_1}= \{x_1\}$, $\downarrow{x_2}= \{x_1,x_2\}$,
$X-\downarrow{x_1}= \{x_2\}$ and $X- \downarrow{x_2}= \emptyset$. So, the upper topology $\cT_U = \{\emptyset,\{x_2\},\{x_1,x_2\}\} = \cT_\cR$.

From the above, we conclude that $\cT_{\cL \cup \cR} = \cT_{in}^\cL$ is equal to the discrete topology, although property (C3) is not satisfied.
This is because $x_2$ is not the supremum of any element of $\cL$.
\end{example}

\begin{example}
Let $X = \{x_1,x_2,x_3,x_4\}$ and let $\cL = \{\{x_1,x_2\},\{x_1,x_2,x_3,x_4\}\}$. Then,
one can easily see that $x_2 \triangleL x_3$, $x_2 \triangleL x_4$, $x_1 \triangleL x_3$ and $x_1 \triangleL x_4$.
Also, $\uparrow{x_1} = \{y \in X: x_1 \triangleLq y\}  = \{x_1,x_3,x_4\}$ and $X- \uparrow{x_1}= \{x_2\}$.
Similarly, $\uparrow{x_2} = \{x_2,x_3,x_4\}$ and $X- \uparrow{x_2}= \{x_1\}$; $\uparrow{x_3} = \{x_3\}$ and
$X - \uparrow{x_3} = \{x_1,x_2,x_4\}$; $\uparrow{x_4}= \{x_4\}$ and $X- \uparrow{x_4} = \{x_1,x_2,x_3\}$.
The lower topology now takes the form $\cT_l = \{\emptyset, \{x_1\}, \{x_2\},\{x_1,x_2\}, \{x_1,x_2,x_3\},
\{x_1,x_2,x_4\},\linebreak\{x_1,x_2,x_3,x_4\}\}$ and $\cT_\cL = \{\emptyset, \{x_1,x_2\},\{x_1,x_2,x_3,x_4\}\}$. So, $\cT_\cL \subset \cT_l$,
but $\cL$ is not $T_0$-separating, because $x_3 \neq x_4$ and there is no $L \in \cL$ that $T_0$-separates $x_3$ and $x_4$.
Also, $\cL$ does not satisfy property (C2), because $\sup\{x_1,x_2\}$ does not exist.

Now, we consider $\cR = \{\{x_3,x_4\},\{x_1,x_2,x_3,x_4\}\}$, and we observe that $x_3 \triangleR x_1, x_3 \triangleR x_2,
x_4 \triangleR x_2$ and $x_4 \triangleR x_3$. So, there exists a mapping between the
nests $\cL$ and $\cR$, and their duality can be seen from the fact that $x_3 \triangleR x_1$ iff $x_1 \triangleL x_3$,
$x_3 \triangleR x_2$ iff $x_2 \triangleL x_3$, $x_4 \triangleR x_2$ iff $x_2 \triangleL x_4$ and $x_4 \triangleR x_1$
iff $x_1 \triangleL x_4$. It can be easily deduced that $\cT_\cR = \{\emptyset, \{x_3,x_4\}, \{x_1,x_2,x_3,x_4\}\}$
and that the upper topology is $\cT_U  = \{\emptyset, \{x_2,x_3,x_4\}, \{x_1,x_3,x_4\},\{x_3\},\{x_4\},
\linebreak\{x_3,x_4\},\{x_1,x_2,x_3,x_4\}\}$. Also,
$\cR$ is not $T_0$-separating, neither satisfies property (C2)* and we deduce that $\cT_\cR \subset \cT_U$.
Last, but not least, we see that $\cT_{in}^\cL$ is the discrete topology, thus $\cT_{\cL \cup \cR} \subset \cT_{in}^{\cL}$.
\end{example}

We have stated that a
non-reflexive order that is induced by a nest $\cL$ makes $\cT_{in}^\cL$ equal to
the discrete topology, so it will automatically be finer than $\cT_{\cL \cup \cR}$. If the
order is reflexive, then Theorem \ref{theorem-conclusive} shows that there is a case
where $\cT_{in}^\cL$ is equal to $\cT_{\cL \cup \cR}$, and this is when properties (C3) and (C3)*
are both satisfied. But (C3) (resp. (C3)*) implies that $\cL$ (resp. $\cR$) is $T_0$-separating, while in Example \ref{example-T0 not nec C3}
(and Proposition \ref{proposition-properties})
we see that $\cL$ can be $T_0$-separating, without (C3) being satisfied. So, the
two topologies coincide in certain type of spaces that are $T_0$-separating under properties (C3) and (C3)*.

The real line, with its natural topology that is generated by the nests $\cL =\{(-\infty,a): a \in \mathbb{R}\}$ and
$\cR= \{(a,\infty): a \in \mathbb{R}\} $ is a specific example of a space of the type that is described
in Theorem \ref{theorem-conclusive} 2. Question: are there other LOTS, apart from the real line with its
natural order, such that 2. from Theorem \ref{theorem-conclusive} is satisfied? The answer is positive.
Consider, for example, sum of copies of the real line. Other spaces admitting such nests are connected
orderable spaces with no minimal and maximal elements (for instance the long line).

Furthermore, we remark that if property (C2) alone is satisfied, then for each $L \in \cL$ we have
that $\sup L \in X-L$, so that $\sup L \notin L$. So, for each $L \in \cL$ there is no $\triangleLq$-maximal
element in $L$, because for each $L \in \cL$ there exists $k \in L$, $x \triangleLq k$, for each $x \in L$,
so that $k = \sup L$. In a similar fashion, we can obtain a dual property for the dual nest $\cR$, with
the order $\trangleRq$.

\begin{corollary}\label{theorem-finalorderability}
Let $X$ be a set and let $\cL,\cR$ be two nests on $X$, such that
$\triangleL = \triangleR$. Let also properties (C3) and (C3)* be satisfied. Then,
$X$ is a LOTS.
\end{corollary}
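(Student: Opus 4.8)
The plan is to obtain the corollary by combining the linearity of $\triangleLq$ with the topological identity furnished by Theorem \ref{theorem-conclusive}, and then recognising the resulting interval topology as an order topology. First I would note that the hypothesis $\triangleL = \triangleR$ is just a restatement that $\cL$ and $\cR$ are dual nests in the sense of Definition \ref{definition-binary}: since $x \triangleR y$ means $y \trangleR x$, the equality $\triangleL = \triangleR$ is exactly the condition $x \triangleL y \Leftrightarrow y \trangleR x$ of Remark \ref{remark-on L and R}, which is what licenses the use of the dual-nest results. Next, because $\cL$ satisfies (C3), Proposition \ref{proposition-C3 implies T0} shows $\cL$ is $T_0$-separating, and by the observation following Definition \ref{definition-order} a $T_0$-separating nest makes $\triangleLq$ a \emph{linear} order on $X$.

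With linearity in hand, I would invoke part 2 of Theorem \ref{theorem-conclusive}: since $\cL$ satisfies (C3) and $\cR$ satisfies (C3)*, we get $\cT_{\cL \cup \cR} = \cT_{in}^\cL$. It then remains to identify $\cT_{in}^\cL$ with the order topology of $\triangleLq$. Recall $\cT_{in}^\cL = \cT_U \vee \cT_l$, where (with respect to $\triangleLq$) $\cT_l$ has subbase $\{X - \uparrow{x} : x \in X\}$ and $\cT_U$ has subbase $\{X - \downarrow{x} : x \in X\}$. The decisive computation is that, \emph{when $\triangleLq$ is linear}, $X - \uparrow{x} = \{y : y \triangleL x\}$ and $X - \downarrow{x} = \{y : x \triangleL y\}$; that is, the two subbasic families are precisely the open down-rays and open up-rays of $\triangleLq$. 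Hence the subbase of $\cT_{in}^\cL$ coincides with the standard open-ray subbase of the order topology, so $\cT_{in}^\cL$ is the order topology of $\triangleLq$.

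Putting the pieces together, $\cT_{\cL \cup \cR}$ equals the order topology of the linear order $\triangleLq$, which is exactly the assertion that $X$ is a LOTS. The step needing the most care --- the ``main obstacle'' --- is the identification of $\cT_{in}^\cL$ with the order topology: it relies on reading $\uparrow{x}$ and $\downarrow{x}$ through the \emph{reflexive} order $\triangleLq$ (as the worked examples in the paper implicitly do), for only then are their complements the open rays, and it is precisely the linearity of $\triangleLq$ that guarantees $\neg(x \triangleLq y)$ is equivalent to $y \triangleL x$. Everything else reduces to citing Proposition \ref{proposition-C3 implies T0} and Theorem \ref{theorem-conclusive}.
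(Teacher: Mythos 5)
Your proof is correct, but it follows a genuinely different route from the paper's. The paper's own proof is a one-line appeal to van Dalen and Wattel: it asserts that (C3) (resp.\ (C3)*) implies both $T_0$-separation and interlocking, so that $\cL \cup \cR$ is a $T_1$-separating, interlocking subbase consisting of two nests, and then the characterization of \cite{DalenWattel} gives that $X$ is a LOTS. You never invoke van Dalen--Wattel or the notion of interlocking at all: you extract linearity of $\triangleLq$ from Proposition \ref{proposition-C3 implies T0} together with the observation following Definition \ref{definition-order}, quote statement 2 of Theorem \ref{theorem-conclusive} to obtain $\cT_{\cL \cup \cR} = \cT_{in}^\cL$, and close the argument with the elementary identification of $\cT_{in}^\cL$ with the order topology of $\triangleLq$, via the computation that for a linear order the complements of $\uparrow{x}$ and $\downarrow{x}$ are exactly the open rays. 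Each route buys something. Yours is self-contained within the paper's own results, exhibits explicitly the linear order and the topology witnessing that $X$ is a LOTS, and sidesteps the paper's assertion that (C3) implies interlocking --- an assertion its proof needs but never verifies. The paper's route is shorter and places the corollary squarely in the van Dalen--Wattel framework that is the paper's theme, and it does not depend on how $\uparrow{x}$ is read. Indeed the one delicate point in your argument --- which you correctly flag yourself --- is that identifying $\cT_{in}^\cL$ with the order topology forces the reflexive reading of $\uparrow{x}$ and $\downarrow{x}$: under the strict reading given in the preliminaries, the sets $X-\uparrow{x}$ are closed rays and the interval topology degenerates to the discrete topology, as the paper itself remarks. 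Since Theorem \ref{theorem-conclusive} is only meaningful under that same reflexive reading, this is a defect of the paper's conventions rather than a gap in your proof.
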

\begin{proof}
We observe that property (C3) (similarly (C3)*) implies $T_0$-separation and interlocking,
so that the conditions of van Dalen and Wattel follow immediately and so $X$ is a LOTS.
\end{proof}

Property (C3) (resp. (C3)*) implies naturally $T_0$-separation and interlocking. Property (C2) (resp. (C2)*) implies
interlocking, if we add $T_0$-separation. So, we can restate Corollary \ref{theorem-finalorderability} as follows:

\begin{corollary}\label{theorem-lots from c2}
Let $X$ be a set and let $\cL,\cR$ be two nests on $X$, such that
$\triangleL = \triangleR$ and each of $\cL$ and $\cR$ $T_0$-separates $X$, respectively. Let also properties (C2) and (C2)* be satisfied. Then,
$X$ is a LOTS.
\end{corollary}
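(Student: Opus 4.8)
The plan is to reduce the statement to the characterization of van Dalen and Wattel exactly as in the proof of Corollary~\ref{theorem-finalorderability}: it suffices to exhibit two dual nests on $X$ that are $T_0$-separating and interlocking, whence $X$ is a LOTS. Two of these three requirements are immediate here. The $T_0$-separation of each of $\cL$ and $\cR$ is part of the hypothesis, and the duality is encoded in the assumption $\triangleL = \triangleR$: unwinding the definitions, this says that $x \triangleL y$ if and only if $y \trangleR x$, which is precisely the condition of Remark~\ref{remark-on L and R} making $\cL$ and $\cR$ dual nests. (One even gets that $\cL \cup \cR$ is $T_1$-separating for free: if $\cL$ separates $x \ne y$, say by $L$ with $x \in L$, $y \notin L$, then $x \triangleL y$, so by duality there is $R \in \cR$ with $y \in R$, $x \notin R$, and the two sets together $T_1$-separate the pair.) Thus the only genuine content of the proof is the implication ``(C2) together with $T_0$-separation implies that $\cL$ is interlocking'' — the step that replaces the appeal to (C3) in Corollary~\ref{theorem-finalorderability}, and which is needed precisely because (C2) alone does not force $T_0$-separation (Proposition~\ref{proposition-properties}).

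I would prove that implication directly from Definition~\ref{definition - interlocking}. Fix any $T \in \cL$; I claim that
\[ T = \bigcup \{ S : S \subsetneq T,\ S \in \cL - \{T\} \}. \]
This certainly suffices, since it holds in particular for every $T$ satisfying the hypothesis of the interlocking condition, and the inclusion $\supseteq$ is automatic. For $\subseteq$, take $t \in T$. By (C2) the supremum $k = \sup T$ (with respect to $\triangleLq$) exists and satisfies $k \in X - T$; since the $T_0$-separation of $\cL$ makes $\triangleLq$ a linear order, $k \notin T$ forces $T$ to have no $\triangleLq$-greatest element. (A greatest element $m$ of $T$ would be an upper bound of $T$, so $k \triangleLq m$; but also $m \triangleLq k$, as $m \in T$ and $k$ is an upper bound, whence $m = k$ by antisymmetry, contradicting $k \notin T$.) Hence there is $t' \in T$ with $t \triangleL t'$, and by the definition of $\triangleL$ this yields $L \in \cL$ with $t \in L$ and $t' \notin L$. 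As $\cL$ is a nest, either $L \subseteq T$ or $T \subseteq L$; the latter is impossible since $t' \in T - L$, so $L \subseteq T$, and $L \ne T$ because $t' \in T - L$, giving $L \subsetneq T$. Since $t \in L$, this places $t$ in the displayed union, proving the claim and hence that $\cL$ is interlocking.

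The argument for $\cR$ is the verbatim dual, using (C2)* and the $\cR$-order $\trangleR$ in place of (C2) and $\triangleL$. Having established $T_0$-separation, duality, and interlocking for both $\cL$ and $\cR$, the hypotheses of van Dalen and Wattel are in force and $X$ is a LOTS. I expect the only delicate point to be the interlocking step, and the two things that make it work are (i) reading (C2) correctly, so that ``the supremum lies outside $L$'' translates, via linearity, into ``$L$ has no greatest element,'' and (ii) the nest dichotomy, which is what upgrades the separating set $L$ to a \emph{proper} subset of $T$. It is worth emphasizing that neither the antecedent $T = \bigcap\{S : T \subsetneq S\}$ in the interlocking definition nor any interaction between $\cL$ and $\cR$ is used in this step: the displayed union identity holds for every member of $\cL$ under (C2) and $T_0$-separation alone.
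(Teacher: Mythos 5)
Your proof is correct, and its outline is the one the paper intends: verify duality, $T_1$-separation of $\cL\cup\cR$, and interlocking of both nests, then invoke van Dalen--Wattel exactly as in Corollary \ref{theorem-finalorderability}. Where you genuinely depart from the paper is the interlocking step. The paper states the implication ``(C2) plus $T_0$-separation implies interlocking'' without proof, and its only detailed justification (in the discussion following the corollary) is indirect: it constructs the auxiliary nest $\cH=\{\{x\in X: x\triangleL y\}: y\in X\}$, shows that $\cH$ satisfies (C3) and induces the same order, and thereby reduces Corollary \ref{theorem-lots from c2} to Corollary \ref{theorem-finalorderability}. You instead prove directly that $\cL$ itself is interlocking, by establishing the stronger fact that under (C2) and $T_0$-separation every $T\in\cL$ satisfies $T=\bigcup\{S\in\cL : S\subsetneq T\}$ (the supremum condition rules out a $\triangleLq$-greatest element of $T$, and the nest dichotomy upgrades the separating set to a proper subset of $T$). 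Your route buys two things: the verification is self-contained, and van Dalen--Wattel is applied to $\cL\cup\cR$ itself, so the LOTS conclusion manifestly concerns the topology these nests generate; the paper's detour through $\cH$ leaves implicit the additional (true, but unverified there) facts that $\cT_{\cH}=\cT_{\cL}$ and that a suitable dual nest for $\cH$ exists. What the paper's route buys in exchange is the structural remark it is really after, namely that (C2) upgrades to (C3) once $\cL$ is replaced by $\cH$, which is how the author argues that the two corollaries ``claim the same result.''
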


Question:
what is the difference between LOTS that are
implied by Corollary \ref{theorem-finalorderability} from LOTS being implied by Corollary \ref{theorem-lots from c2}? The answer is that
the two corollaries claim the same result. Namely, for a nest $\cL$ of subsets of $X$, (C3) follows by (C2), provided that
$\triangleLq$ is a linear order on $X$. Indeed, suppose $\triangleLq$ is a linear order on $X$ and $\cL$ satisfies (C2).
Then, the nest $\cH = \{\{x \in X : x \triangleL y\}: y \in X\}$ satisfies (C3) and $\triangleH = \triangleL$. To see
this, take a point $y \in X$. If $y$ is the $\triangleLq$-first element of $X$, then it is the $\triangleLq$-supremum
of the empty set. Suppose there exists $x \in X$, with $x \triangleL y$ and let $H = \{x \in X : x \triangleL y\} \in \cH$.
If $y$ is not the $\triangleLq$-supremum of $H$, then $H$ has a $\triangleLq$-maximal element, namely $z$. Since $z \triangleL y$,
there exists $L_z \in \cL$, such that $z \in L_z$ and $y \notin L_z$. If $x \in H$ and $x \neq z$, it follows that
$x \triangleL z$ and, by the same reason, $x \in L_x$, for some $L_x \in \cL$ for which $z \notin L_x$. Since
$\cL$ is a nest, $x \in L_x \subset L_z$. Thus, $H \subset L_z$ and, in fact, $H = L_z$, because $y \notin L_z$.
By (C2), the $\triangleL$-supremum of $H = L_z$ does not belong to $H$, a contradiction, because $z \in H$
and $x \triangleL z$, for every $x \in H$.

The following example shows that both properties (C2) and (C2)* do not necessarily imply $T_0$-separation. So, Theorem \ref{theorem-lots from c2} without the $T_0$-separation property of $\cL$ and $\cR$ generates spaces that are not necessarily linearly ordered, but
carry analogous order theoretic properties to linearly ordered sets.

\begin{example}
Consider the set of real numbers $\mathbb{R}$ and the nests $\cL = \{(-\infty,n): n \in \mathbb{N}\}$
and $\cR = \{(n,\infty): n \in \mathbb{N}\}$ on $\mathbb{R}$. Then, $\cL$ and $\cR$ satisfy
conditions (C2) and (C2)*, respectively. Indeed, for each $L = (-\infty,n) \in \cL$, $\sup L = n \notin L$
and for each $R \in \cR$, $\inf(n,\infty)= n \notin R$. We also remark, from the definition of $T_0$-separation, that neither $\cL$ nor $\cR$ is
$T_0$-separating.
\end{example}

{\bf{Open Question 1.}} Are there conditions that can be added in Corollaries \ref{theorem-finalorderability} and \ref{theorem-lots from c2}, respectively,
so that they will lead into a brand new characterization of LOTS?

\subsection{Interlocking Nests via the Alexandroff Topology.}

It is known that, for a partially ordered set $(X,<)$, the Alexandroff
topology is the family $\mathcal{A} = \{U \subset X : U = \uparrow U\}$
(see \cite{Compendium}).

\begin{proposition}\label{proposition-for the proof}
Let $X$ be a set and let $\mathcal{L}$ be a nest on $X$. If $Y \subset X$, then:
\[
\uparrow{Y} =
\bigcup_{L \in \cL} \{X-L : Y \cap L \neq \emptyset\}.
\]
\end{proposition}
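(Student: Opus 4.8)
The plan is to prove the equality by two inclusions, reading $\uparrow{Y}$ as the upper set of $Y$ with respect to the generated order $\triangleL$. First I would unwind the definition of $\uparrow{Y}$ together with Definition \ref{definition-order}, so that
\[
\uparrow{Y} = \{x \in X : \exists\, y \in Y \text{ such that } y \triangleL x\},
\]
and I would read the right-hand side as $\bigcup\{X-L : L \in \cL,\ Y \cap L \neq \emptyset\}$, that is, the union of the complements of those members of $\cL$ that meet $Y$ (the index $L$ ranges over the family satisfying $Y \cap L \neq \emptyset$, and each such $L$ contributes the point-set $X-L$).

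For the inclusion $\uparrow{Y} \s \bigcup\{X-L : L \in \cL,\ Y \cap L \neq \emptyset\}$, I would take $x \in \uparrow{Y}$, fix a witness $y \in Y$ with $y \triangleL x$, and then fix $L \in \cL$ witnessing $y \triangleL x$, so that $y \in L$ and $x \notin L$. Since $y \in Y \cap L$ we get $Y \cap L \neq \emptyset$, and since $x \notin L$ we get $x \in X-L$; hence $x$ lies in the displayed union.

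For the reverse inclusion I would take $x$ in the union, fix $L \in \cL$ with $Y \cap L \neq \emptyset$ and $x \in X-L$, and then choose any $y \in Y \cap L$. From $y \in L$ and $x \notin L$ I read off $y \triangleL x$ directly from Definition \ref{definition-order}, and since $y \in Y$ this gives $x \in \uparrow{Y}$, completing the equality.

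The argument is a pure definition chase, so I expect no genuine obstacle; the only point demanding care is the correct parsing of the set-builder expression on the right, as indicated above. It is worth remarking that the nest hypothesis on $\cL$ is never actually used — the identity holds verbatim for an arbitrary $\cL \s \cP(X)$ — so the word ``nest'' in the statement serves only to fix the context in which the proposition will subsequently be applied to characterize interlocking nests via the Alexandroff topology.
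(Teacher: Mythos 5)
Your proof is correct and takes essentially the same route as the paper, whose proof is the identical definition chase written as a single chain of equivalences unwinding $\uparrow{Y}$ and $\triangleL$; splitting it into two inclusions is only a cosmetic difference. Your closing remark is also accurate: the nest hypothesis is nowhere used, and the identity holds verbatim for any family $\cL \subset \mathcal{P}(X)$ with the order of Definition \ref{definition-order definition by nest}.
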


\begin{proof}
$x \in \bigcup_{L \in \cL} \{X-L : Y \cap L \neq \emptyset\}$, if and only
if there exists $L \in \cL$, such that $x \in X-L$ and $Y \cap L \neq \emptyset$, if and only if
there exists $L \in \cL$, such that $x \in X-L$ and there exists $y \in Y \cap L$, if and only
if there exists $L \in \cL$, such that $y \in Y$ where $y \in L$ and $x \notin L$, if and only
if there exists $y \in Y$, such that $y \triangleL x$, if and only if $x \in \uparrow{Y}$.
\end{proof}

\begin{proposition}\label{proposition-Alex}
Let $X$ be a set and let $\mathcal{L}$ be a nest on $X$. Then, the Alexandroff topology, on $X$, is
given by the collection:
\[\mathcal{A} = \{Y \subset X : Y = \bigcup\{X-L : Y \cap L \neq \emptyset\}\}\]
\end{proposition}
\begin{proof}
The proof follows immediately from Proposition \ref{proposition-for the proof}.
\end{proof}

\begin{proposition}\label{proposition-preparatory for final}
Let $X$ be a set and let $\cL$ be a nest on $X$. A set $M \in \cL$ is closed with
respect to the Alexandroff topology via $\triangleL$, if and only if $M$ can take the following form:
\[M = \bigcap \{L \in \cL : M \subsetneq L\}.\]
\end{proposition}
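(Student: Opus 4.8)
The plan is to reduce the closedness of $M$ to a statement about its complement and then apply the description of the Alexandroff topology furnished by Proposition \ref{proposition-Alex}. Since $\mathcal{A}$ consists exactly of the upper sets, $M$ is closed precisely when its complement $X-M$ is open, i.e. $X-M \in \mathcal{A}$. By Proposition \ref{proposition-Alex}, membership of $X-M$ in $\mathcal{A}$ is equivalent to the identity
\[X - M = \bigcup\{X - L : L \in \cL,\ (X-M)\cap L \neq \emptyset\},\]
so the whole argument amounts to rewriting this identity into the asserted form and then reading the resulting chain of equivalences in both directions.

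The key step, and the one place where the nest hypothesis is genuinely used, is to identify which $L \in \cL$ actually contribute to the union on the right. I would show that $(X-M)\cap L \neq \emptyset$ if and only if $M \subsetneq L$. Indeed, $(X-M)\cap L \neq \emptyset$ says exactly that $L \not\subseteq M$; since $\cL$ is a nest, $L$ and $M$ are comparable, so $L \not\subseteq M$ forces $M \subseteq L$, and equality is excluded (it would give $L \subseteq M$), whence $M \subsetneq L$. The converse is immediate, since $M \subsetneq L$ rules out $L \subseteq M$. This comparison is the main (though modest) obstacle; everything around it is formal.

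With this translation in hand the identity becomes
\[X - M = \bigcup\{X - L : L \in \cL,\ M \subsetneq L\},\]
and taking complements of both sides, using De Morgan's law, yields
\[M = \bigcap\{L \in \cL : M \subsetneq L\},\]
which is exactly the desired form; since each link in the chain is an equivalence, the converse direction requires no separate work. Finally, I would record the degenerate case in which $M$ is the $\subseteq$-largest element of the nest, so that $\{L \in \cL : M \subsetneq L\} = \emptyset$: here the empty union is $\emptyset$ while the empty intersection is, by convention, all of $X$, so both sides correctly assert that such an $M$ is closed exactly when $M = X$. This confirms that the boundary case is consistent with the stated characterization.
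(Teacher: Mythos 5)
Your proof is correct and follows essentially the same route as the paper's: apply Proposition \ref{proposition-Alex} to the complement $X-M$, use the nest property to translate $(X-M)\cap L \neq \emptyset$ into $M \subsetneq L$, and take complements. The extra details you supply (the explicit comparability argument and the degenerate case of a $\subseteq$-largest $M$) are sound refinements of the same argument, not a different approach.
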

\begin{proof}
$M$ is closed with respect to the Alexandroff topology, if and only if $M^c$ is open, i.e.
$M^c = \bigcup\{X-L : L \in \cL, (X-M) \cap L \neq \emptyset\}$, if and only if
$M^c = \bigcup \{X-L : L \nsubseteq M\}$, if and only if $M^c = \bigcup \{X-L : M \subsetneq L\}$,
i.e. $M = \bigcap \{L : M \subsetneq L\}$.
\end{proof}

\begin{proposition}
Let $X$ be a set and let $\cL$ be a nest on $X$. A set $X-L \in \cL^{c}$ is closed with respect
to the Alexandroff topology which is defined by $\triangleleft_{\cL^c}$, if and only if
$M = \bigcup \{L \in \cL : L \subsetneq M\}$.
\end{proposition}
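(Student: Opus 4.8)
The plan is to recognise this statement as the precise dual of Proposition \ref{proposition-preparatory for final}, obtained by applying that proposition to the complement nest $\cL^c$ in place of $\cL$ and then translating the resulting condition back into a statement about $\cL$ by means of De Morgan's laws. To fix notation, I would write the set under consideration as $X-M$ with $M \in \cL$, so that it is a member of $\cL^c$; the claim then reads that $X-M$ is closed in the Alexandroff topology induced by $\triangleleft_{\cL^c}$ if and only if $M = \bigcup\{L \in \cL : L \subsetneq M\}$.

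First I would check that $\cL^c$ is genuinely a nest, so that Proposition \ref{proposition-preparatory for final} applies to it: given two members $X-M, X-N$ of $\cL^c$, the nest property of $\cL$ gives $M \subset N$ or $N \subset M$, and taking complements reverses the inclusion, so one of $X-M, X-N$ contains the other. Since $\triangleleft_{\cL^c}$ is exactly the order generated by the nest $\cL^c$ in the sense of Definition \ref{definition-order}, Proposition \ref{proposition-preparatory for final}, read with $\cL^c$ in the role of the nest, states that a member $K \in \cL^c$ is closed in the Alexandroff topology induced by $\triangleleft_{\cL^c}$ if and only if $K = \bigcap\{K' \in \cL^c : K \subsetneq K'\}$.

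Next I would carry out the translation back to $\cL$. Taking $K = X-M$ and writing each member of $\cL^c$ as $X-L$ with $L \in \cL$, the strict inclusion $K \subsetneq K'$ becomes $X-M \subsetneq X-L$, which is equivalent to $L \subsetneq M$. Hence the closure criterion reads
\[X-M = \bigcap\{X-L : L \in \cL,\; L \subsetneq M\}.\]
Applying De Morgan's law to the right-hand side rewrites it as $X - \bigcup\{L \in \cL : L \subsetneq M\}$, and since complementation within $X$ is an involution the whole equality is equivalent to
\[M = \bigcup\{L \in \cL : L \subsetneq M\},\]
which is exactly the stated condition.

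The argument is essentially bookkeeping once the duality is in place, so I do not expect a genuine obstacle. The only point requiring care is the reversal of inclusions together with the accompanying swap of $\bigcap$ and $\bigcup$ under complementation, where it is easy to drop or misplace the strictness of $L \subsetneq M$. I would also confirm, before invoking Proposition \ref{proposition-preparatory for final}, that the Alexandroff topology named in the statement is the one attached to $\triangleleft_{\cL^c}$ (equivalently $\triangleright_{\cL}$, by the earlier remark identifying $\triangleleft_{\cL^c}$ with $\triangleright_{\cL}$), so that the preparatory proposition is being applied to the correct order; this is immediate from the way both propositions are phrased.
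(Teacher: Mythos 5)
Your proposal is correct and follows essentially the same route as the paper: both apply Proposition \ref{proposition-preparatory for final} with $\cL^c$ playing the role of the nest and $\triangleleft_{\cL^c}$ the induced order, translate the strict inclusion $X-M \subsetneq X-L$ into $L \subsetneq M$, and finish with De Morgan's law. Your explicit check that $\cL^c$ is itself a nest is a detail the paper leaves implicit, but it does not change the argument.
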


\begin{proof}
According to Proposition \ref{proposition-preparatory for final}, $X-M$ is
closed with respect to the Alexandroff topology via $\triangleleft_{\cL^c}$, if
and only if $X-M = \bigcap \{X-L: X-M \subsetneq X-L\} = \bigcap\{X-L : L \subsetneq M\}$,
if and only if $M = \bigcup \{L : L \subsetneq M\}$.
\end{proof}

\begin{theorem}\label{corollary - interl alex}
Let $X$ be a set and let $\cL$ be a nest on $X$. $\cL$ is {\em interlocking},
if and only if for each $L \in \cL$, such that $L$ is closed with respect
to the Alexandroff topology via $\triangleL$,
we have that $X-L$ is closed with respect to the Alexandroff topology via $\triangleleft_{\mathcal{L}^c}$.
\end{theorem}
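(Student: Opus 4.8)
The plan is to recognize that this theorem is, after unwinding conventions, a verbatim translation: the two conditions occurring in the definition of an interlocking nest are precisely the closedness conditions supplied by the two propositions immediately preceding the statement. Accordingly, I would not argue the two directions separately but instead rewrite the interlocking property as a chain of equivalences, so that both implications of the biconditional fall out simultaneously.

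First I would reconcile the inclusion symbols. In the paper's convention $\subset$ means $\subseteq$, so in Definition \ref{definition - interlocking} the clause ``$T \subset S$ with $S \in \cL - \{T\}$'' is the same as $T \subsetneq S$, and likewise ``$S \subset T$ with $S \in \cL - \{T\}$'' is the same as $S \subsetneq T$. Hence the interlocking property for the nest $\cL$ reads: for every $T \in \cL$ satisfying
\[T = \bigcap\{S \in \cL : T \subsetneq S\},\]
one has
\[T = \bigcup\{S \in \cL : S \subsetneq T\}.\]

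Next I would invoke the two preceding propositions to identify each side. By Proposition \ref{proposition-preparatory for final}, the hypothesis $T = \bigcap\{S \in \cL : T \subsetneq S\}$ holds if and only if $T$ is closed with respect to the Alexandroff topology via $\triangleL$. By the proposition that follows it, the conclusion $T = \bigcup\{S \in \cL : S \subsetneq T\}$ holds if and only if $X - T$ is closed with respect to the Alexandroff topology via $\triangleleft_{\cL^c}$. Substituting these two equivalences into the rewritten interlocking condition turns it word for word into: for each $L \in \cL$ that is Alexandroff-closed via $\triangleL$, the complement $X - L$ is Alexandroff-closed via $\triangleleft_{\cL^c}$. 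Because each step of the rewriting is an equivalence, both directions of the stated ``if and only if'' are obtained at once, with no separate forward and backward arguments.

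The only point requiring care — and the main, mild obstacle — is the index-set and empty-family bookkeeping: I must confirm that the strict inclusion $\subsetneq$ appearing in the two propositions genuinely matches the ``$S \in \cL - \{T\}$ together with $\subset$'' formulation of the interlocking definition, and that the degenerate case in which $\{S \in \cL : T \subsetneq S\}$ is empty (so that the intersection is read as $X$) is already absorbed into the statement of Proposition \ref{proposition-preparatory for final}. Once this bookkeeping is verified, the theorem follows purely by substitution, with no further computation needed.
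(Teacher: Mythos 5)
Your proof is correct and is essentially the paper's own argument: the paper states this theorem without a separate proof, precisely because it is the immediate substitution of Proposition \ref{proposition-preparatory for final} and the proposition following it into Definition \ref{definition - interlocking}, which is exactly the chain of equivalences you give. Your bookkeeping point (that ``$T \subset S$, $S \in \cL - \{T\}$'' amounts to $T \subsetneq S$, and dually for the union) is the right detail to verify, and it does go through.
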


We note that the characterization of Theorem \ref{corollary - interl alex} does not require
the nests to be interlocking, so it is equivalent to Definition \ref{definition - interlocking}.

{\bf{Open Question 2.}} Given the characterization of interlocking nest in terms of
closed sets of the Alexandroff topology, how can this lead into a restatement of the
orderability problem as (re-)stated in \cite{Good-Papadopoulos}? Will such a restatement
lead into a brand new proof, using purely topological (than order-theoretic) tools?

\subsection{Interlocking Nests via Lower Sets.}

\begin{proposition}\label{proposition - downset}
Let $X$ be a set, let $\mathcal{L}$ be a nest on $X$ and
$\triangleleft_{\mathcal{L}}$ be the corresponding order on $X$ that is induced
by $\mathcal{L}$. If $Y \subset X$, then:

\[\downarrow{Y} =
\bigcup\{L \in \mathcal{L} : Y \nsubseteq L\}\]

\end{proposition}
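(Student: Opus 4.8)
The plan is to prove the set equality by reducing both sides to one and the same first-order condition, exactly as in the proof of the dual statement, Proposition \ref{proposition-for the proof}. Since each side is defined purely through membership together with the generated order $\triangleL$, the whole argument is a definitional unwinding in which the only nontrivial move is the commutation of two existential quantifiers.

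First I would expand the left-hand side. By the definition of $\downarrow$, an element $x$ lies in $\downarrow{Y}$ precisely when there is some $y \in Y$ with $x \triangleL y$; and by Definition \ref{definition-order}, $x \triangleL y$ means that there exists $L \in \cL$ with $x \in L$ and $y \notin L$. Combining these, $x \in \downarrow{Y}$ holds if and only if there exist $y \in Y$ and $L \in \cL$ such that $x \in L$ and $y \notin L$.

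Next I would expand the right-hand side. An element $x$ belongs to $\bigcup\{L \in \cL : Y \nsubseteq L\}$ exactly when there is some $L \in \cL$ with $x \in L$ and $Y \nsubseteq L$. Reading off the meaning of $Y \nsubseteq L$, the latter condition is equivalent to the existence of a point $y \in Y$ with $y \notin L$. Hence $x$ lies in the right-hand union if and only if there exist $L \in \cL$ and $y \in Y$ with $x \in L$ and $y \notin L$.

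The two resulting conditions are literally the same formula written with the two existential quantifiers (over $y \in Y$ and over $L \in \cL$) in opposite orders, so they are logically equivalent and the equality follows. I do not expect any real obstacle: the content is entirely definitional, and, as with Proposition \ref{proposition-for the proof}, the nest hypothesis on $\cL$ is not actually used in establishing the equality itself. It only becomes relevant, via the transitivity granted by Corollary \ref{corollary-transitivity}, when one wants $\downarrow{Y}$ to genuinely be a lower set for the order $\triangleL$.
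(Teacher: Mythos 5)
Your proposal is correct and is essentially the paper's own proof: the paper likewise establishes the equality by a chain of definitional equivalences, unwinding $x \in \downarrow{Y}$ to $\exists\, y \in Y\ \exists\, L \in \mathcal{L}\,(x \in L \wedge y \notin L)$ and recognizing this as $\exists\, L \in \mathcal{L}\,(x \in L \wedge Y \nsubseteq L)$, i.e.\ membership in the union. Your side observation that the nest hypothesis is not needed for the equality itself (only for $\downarrow{Y}$ to behave well as a lower set, via transitivity) is also accurate.
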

\begin{proof}
\begin{eqnarray*}
x \in \downarrow{Y} &\Leftrightarrow& \,\exists\,y \in Y, x \triangleleft_{\mathcal{L}} y \\
&\Leftrightarrow& \,\exists\, y \in Y,\,\exists\, L \in \mathcal{L}, x \in L \textrm{ and } y \notin L \\
&\Leftrightarrow& \, \exists\, L \in \mathcal{L}, x \in L, Y \nsubseteq L \\
&\Leftrightarrow& x \in \bigcup \{L \in \mathcal{L} : Y \nsubseteq L\}.
\end{eqnarray*}
\end{proof}

\begin{corollary}\label{corollary - preliminary to cor no maximal}
If $y \in X$ and $\cL$ is a nest on $X$, then:
\[
\downarrow{y} =
 \bigcup \{L \in \mathcal{L} : y \notin L\}.
\]
\end{corollary}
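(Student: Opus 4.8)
The plan is to obtain the statement as the singleton case of Proposition \ref{proposition - downset}. First I would apply that proposition with $Y = \{y\}$. On the left-hand side, the convention for singleton lower sets recorded in the Preliminaries gives $\downarrow\{y\} = \downarrow y$, so no work is needed there. On the right-hand side, the indexing condition $Y \nsubseteq L$ becomes $\{y\} \nsubseteq L$, and the only point worth spelling out is the elementary equivalence $\{y\} \nsubseteq L \Leftrightarrow y \notin L$: a singleton fails to be contained in $L$ exactly when its unique member is missing from $L$. Substituting this equivalence into $\bigcup\{L \in \cL : Y \nsubseteq L\}$ produces $\bigcup\{L \in \cL : y \notin L\}$, which is precisely the right-hand side claimed.

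There is no real obstacle here; the corollary is a direct specialization, and its entire content is the trivial singleton observation above. Should a self-contained derivation be preferred over invoking the proposition, one can instead unwind the definitions along the same chain of equivalences used in its proof: $x \in \downarrow y$ iff $x \triangleL y$, iff there exists $L \in \cL$ with $x \in L$ and $y \notin L$, iff $x$ belongs to some member of $\cL$ omitting $y$, iff $x \in \bigcup\{L \in \cL : y \notin L\}$. Either route is entirely routine.
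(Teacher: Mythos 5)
Your proposal is correct and matches the paper's route exactly: the paper states this as an immediate corollary of Proposition \ref{proposition - downset}, i.e.\ the specialization $Y=\{y\}$ together with the trivial equivalence $\{y\} \nsubseteq L \Leftrightarrow y \notin L$, which is precisely your argument. Your alternative self-contained chain of equivalences is also just the proposition's own proof restricted to a singleton, so nothing differs in substance.
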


\begin{proposition}\label{corollary - preliminary to cor upper bound}
If $M \in \cL$, then:
\[
\downarrow{M} = \bigcup \{L \in \cL : M \nsubseteq L\} = \bigcup\{L \in \cL : L \subsetneq M\}
\]
and hence $M \in \cL$ is a lower set, iff:
\[M = \bigcup\{L \in \cL : L \subsetneq M\}\]
\end{proposition}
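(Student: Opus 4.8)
The plan is to derive the displayed chain of equalities by specializing the already-proved formula for $\downarrow{Y}$ and then invoking the total-order-by-inclusion structure of a nest; the concluding biconditional will then be immediate from the definition of a lower set.

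First I would apply Proposition \ref{proposition - downset} with $Y = M$. Since $M \subset X$, that proposition yields directly
\[\downarrow{M} = \bigcup\{L \in \cL : M \nsubseteq L\},\]
which is the left-hand equality of the claim; no further argument is needed at this stage.

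The substantive step is to show that the index set $\{L \in \cL : M \nsubseteq L\}$ coincides with $\{L \in \cL : L \subsetneq M\}$, and this is exactly where the nest hypothesis is used. Fixing $L \in \cL$, the fact that $\cL$ is a nest and $M \in \cL$ forces either $L \subseteq M$ or $M \subseteq L$. If $M \nsubseteq L$, then $M \subseteq L$ fails, so $L \subseteq M$ must hold; moreover $L \neq M$, since $L = M$ would give $M \subseteq L$, and hence $L \subsetneq M$. Conversely, if $L \subsetneq M$, then $L \subseteq M$ with $L \neq M$, so $M \subseteq L$ cannot hold (otherwise $L = M$), i.e. $M \nsubseteq L$. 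Thus the two conditions on $L$ are equivalent, the two index sets are literally the same set, and therefore the two unions agree, establishing the second equality.

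Finally, for the biconditional, I would simply recall that by definition $M$ is a lower set precisely when $M = \downarrow{M}$. Substituting $\downarrow{M} = \bigcup\{L \in \cL : L \subsetneq M\}$ gives that $M \in \cL$ is a lower set if and only if $M = \bigcup\{L \in \cL : L \subsetneq M\}$, as asserted. The only place any genuine reasoning enters is the index-set identification, and since that rests entirely on the nest's linear ordering under inclusion, I anticipate no real obstacle beyond keeping careful track of the strict inclusion $L \neq M$.
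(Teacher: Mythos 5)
Your proof is correct and follows the same route as the paper: specialize Proposition \ref{proposition - downset} to $Y=M$, then use the nest property to identify the index sets $\{L \in \cL : M \nsubseteq L\}$ and $\{L \in \cL : L \subsetneq M\}$. The paper compresses that identification into the phrase ``because $\cL$ is a nest''; you have simply written out the case analysis it implicitly relies on.
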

\begin{proof}
By Proposition \ref{proposition - downset} we get that $\downarrow{M} = \bigcup\{L \in \cL : M \nsubseteq L\}$,
which is equal to $\bigcup\{L \in \cL : L \subsetneq M\}$, because $\cL$ is a nest.
\end{proof}

\begin{proposition}\label{corollary - 2.5}
Let $X$ be a set and let $\cL$ be a $T_0$-separating nest, on $X$. Then, $M \in \cL$
is a lower set if and only if $M$ has no maximal element.
\end{proposition}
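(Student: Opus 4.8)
The plan is to reduce the statement to a purely set-theoretic condition using Proposition~\ref{corollary - preliminary to cor upper bound}, which asserts that $M \in \cL$ is a lower set precisely when $M = \bigcup\{L \in \cL : L \subsetneq M\}$. Granting this, the whole task becomes showing that the identity $M = \bigcup\{L \in \cL : L \subsetneq M\}$ holds if and only if $M$ has no $\triangleL$-maximal element. I would establish the two implications separately, in each case using the nest structure to translate set inclusions into order comparisons and back.

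First I would prove that a lower set $M$ cannot have a maximal element, arguing by contradiction. Suppose $m \in M$ were maximal. Since $M = \bigcup\{L \in \cL : L \subsetneq M\}$, there is some $L \in \cL$ with $L \subsetneq M$ and $m \in L$. Properness of the inclusion yields a point $p \in M \setminus L$, and then $m \in L$ together with $p \notin L$ gives $m \triangleL p$ with $p \in M$, contradicting the maximality of $m$. Notice this direction needs nothing beyond the union characterization of a lower set.

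For the converse, assume $M$ has no maximal element and show $M = \bigcup\{L \in \cL : L \subsetneq M\}$. The inclusion $\supseteq$ is immediate since each such $L$ lies inside $M$. For $\subseteq$, take $m \in M$; as $m$ is not maximal there is $x \in M$ with $m \triangleL x$, hence some $L \in \cL$ with $m \in L$ and $x \notin L$. Here the nest dichotomy $L \s M$ or $M \s L$ does the real work: the alternative $M \s L$ would force $x \in L$, a contradiction, so $L \s M$; and because $x \in M \setminus L$ the inclusion is strict. Thus $m \in L \subsetneq M$, placing $m$ in the union and completing the equality.

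The step I expect to require the most care is this converse, specifically the invocation of the nest dichotomy to exclude $M \s L$ — this is exactly where the hypothesis that $\cL$ is a nest is indispensable, and where the argument would collapse for an arbitrary subbase. The $T_0$-separation hypothesis is not needed for the inclusions themselves; its role is to guarantee that $\triangleLq$ is a linear order, so that \emph{maximal} coincides with \emph{greatest} and the notion of a maximal element of $M$ is unambiguous. The algebraic heart of the proof, however, rests solely on Proposition~\ref{corollary - preliminary to cor upper bound} together with the nest comparison.
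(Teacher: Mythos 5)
Your proposal is correct and follows essentially the same route as the paper: the paper's own (terse) proof likewise restates ``$M$ has no maximal element'' as ``for every $x\in M$ there exists $y\in M$ with $x\triangleL y$'' and then invokes the nest characterizations of $\downarrow$-sets established just before (Proposition~\ref{corollary - preliminary to cor upper bound} / Corollary~\ref{corollary - preliminary to cor no maximal}), which is exactly the reduction you carry out, only you fill in the nest-dichotomy details explicitly. Your side observation that the $T_0$-separation hypothesis is not actually needed for the equivalence is also accurate.
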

\begin{proof}
$M$ has no maximal element if and only if for every $x \in M$, there exists $y \in M$, such that $x \triangleL y$.
So, by Corollary \ref{corollary - preliminary to cor no maximal} the proof is complete.
\end{proof}

\begin{proposition}
Let $X$ be a set and let $\cL$ be a nest on $X$. For $M \in \cL$, $X-M \in \cL^c$ is a lower
set in $X$, with respect to the order $\triangleleft_{\cL^c}$, if and only if:
\[M = \bigcap \{L \in \cL : M \subsetneq L\}.\]
\end{proposition}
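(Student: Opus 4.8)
The plan is to obtain the result by applying the lower-set characterization of Proposition \ref{corollary - preliminary to cor upper bound} to the dual nest $\cL^c$ and then translating the resulting set identity by complementation, exactly as in the complementation argument used for Alexandroff-closed sets in the preceding proposition (with the lower-set characterization now playing the role of Proposition \ref{proposition-preparatory for final}). First I would observe that $\cL^c = \{X-L : L \in \cL\}$ is itself a nest, since complementation reverses inclusions while preserving the property that any two members are comparable; moreover $\triangleleft_{\cL^c}$ is precisely the order induced by the family $\cL^c$ in the sense of Definition \ref{definition-order definition by nest}. Hence Proposition \ref{corollary - preliminary to cor upper bound}, applied verbatim with $\cL$ replaced by the nest $\cL^c$ and with the chosen member being $X-M \in \cL^c$, yields that $X-M$ is a lower set with respect to $\triangleleft_{\cL^c}$ if and only if
\[X-M = \bigcup\{N \in \cL^c : N \subsetneq X-M\}.\]

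Next I would rewrite each $N \in \cL^c$ as $N = X-L$ for a unique $L \in \cL$ and note that the strict inclusion $X-L \subsetneq X-M$ is equivalent to $M \subsetneq L$, because complementation is an inclusion-reversing bijection on $\cP(X)$. This converts the displayed condition into
\[X-M = \bigcup\{X-L : L \in \cL,\ M \subsetneq L\}.\]
Finally, taking complements of both sides and applying De Morgan's law turns the union into the desired intersection:
\[M = X - \bigcup\{X-L : M \subsetneq L\} = \bigcap\{L \in \cL : M \subsetneq L\},\]
which is exactly the stated identity. Because each step is a logical equivalence, the two directions of the biconditional are handled simultaneously and no separate argument is needed.

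The only point requiring care—and the main (though modest) obstacle—is the faithful tracking of the \emph{strict} inclusion under complementation: one must verify that $X-L \subsetneq X-M$ corresponds to $M \subsetneq L$ and not merely to $M \subseteq L$, so that the index sets appearing on the two sides of the identity match exactly and no spurious term ($L = M$) is introduced. Once strictness is preserved correctly, the equivalence reduces to pure set algebra together with the single appeal to Proposition \ref{corollary - preliminary to cor upper bound}, and no further order-theoretic input is required.
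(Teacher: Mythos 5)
Your proposal is correct and takes essentially the same approach as the paper: the paper likewise applies its down-set formula (Proposition \ref{proposition - downset}) to the complementary nest $\cL^c$ with $Y = X-M$ and then passes to complements, converting $X-M \nsubseteq X-L$ into $M \subsetneq L$ via the nest property --- exactly the strictness bookkeeping you highlight. The only cosmetic difference is that you invoke the already-packaged lower-set characterization of Proposition \ref{corollary - preliminary to cor upper bound} for the nest $\cL^c$, whereas the paper works directly from Proposition \ref{proposition - downset} and performs that conversion inline.
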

\begin{proof}
According to Proposition \ref{proposition - downset}, $X-M$ is a lower set, with respect
to $\triangleleft_{\cL^c}$, if and only if
$X-M  = \bigcup \{X-L : L \in \cL, X-M \nsubseteq X-L\}$ if and only if
$M = \bigcap \{L : L \in \cL, X-L \subsetneq X-M\} = \bigcap \{L \in \cL : M \subsetneq L\}$.

\end{proof}

\begin{proposition}
Let $X$ be a set and let $\cL$ be a $T_0$-separating nest on $X$. Then, $X-M \in \cL^c$
is a lower set, with respect to $\triangleleft_{\cL^c}$ if and only if $X-M$ has no
$\triangleL$-minimal element.
\end{proposition}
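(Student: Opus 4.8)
The plan is to recognize this statement as the exact dual of Proposition \ref{corollary - 2.5}, obtained by passing from the nest $\cL$ to its complementary nest $\cL^c$. First I would record the two structural facts that make this transfer legitimate. The family $\cL^c = \{X-L : L \in \cL\}$ is again a nest: if $L_1 \subseteq L_2$ in $\cL$, then $X-L_2 \subseteq X-L_1$, so $\cL^c$ is totally ordered by inclusion. Moreover $\cL^c$ is $T_0$-separating precisely when $\cL$ is, since the defining condition for $T_0$-separation (for each $x \neq y$ some member of the family contains exactly one of them) is manifestly invariant under complementation. Hence $\cL^c$, equipped with its induced order $\triangleleft_{\cL^c}$, satisfies the hypotheses of Proposition \ref{corollary - 2.5}.

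Applying Proposition \ref{corollary - 2.5} to the nest $\cL^c$ and to its member $X-M \in \cL^c$ yields at once that $X-M$ is a lower set with respect to $\triangleleft_{\cL^c}$ if and only if $X-M$ has no $\triangleleft_{\cL^c}$-maximal element. It then remains only to translate ``$\triangleleft_{\cL^c}$-maximal'' back into the language of $\triangleL$. Since $x \triangleleft_{\cL^c} y$ holds exactly when $y \triangleL x$ (the remark that $\triangleleft_{\cL^c} = \trangleL$, following Definition \ref{definition-order}), an element $z \in X-M$ is $\triangleleft_{\cL^c}$-maximal if and only if there is no $w \in X-M$ with $w \triangleL z$, that is, if and only if $z$ is a $\triangleL$-minimal element of $X-M$. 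Thus the absence of a $\triangleleft_{\cL^c}$-maximal element is exactly the absence of a $\triangleL$-minimal element, and the proposition follows.

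A self-contained alternative would bypass the dualization and instead chain the immediately preceding proposition with a direct set-theoretic argument. That proposition already gives that $X-M$ is a lower set with respect to $\triangleleft_{\cL^c}$ if and only if $M = \bigcap\{L \in \cL : M \subsetneq L\}$. Since $M \subseteq \bigcap\{L \in \cL : M \subsetneq L\}$ holds trivially, this equality is equivalent to the condition that for every $z \in X-M$ there exists $L \in \cL$ with $M \subsetneq L$ and $z \notin L$. I would then check both implications against ``$X-M$ has no $\triangleL$-minimal element'': given such an $L$, any $w \in L-M$ lies in $X-M$ and satisfies $w \triangleL z$, so $z$ is not $\triangleL$-minimal; conversely, given $w \in X-M$ with $w \triangleL z$, the witnessing $L \in \cL$ (with $w \in L$ and $z \notin L$) must contain $M$ strictly, because the nest property forbids $L \subseteq M$ while $w \notin M$ forces $M \neq L$.

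I expect no serious obstacle, since the result is formally dual to an already-established proposition; the only point requiring genuine care is the bookkeeping of the order reversal. One must make sure that ``maximal'' is read with respect to $\triangleleft_{\cL^c}$ when invoking Proposition \ref{corollary - 2.5} on $\cL^c$, and that this becomes ``$\triangleL$-minimal'' after passing back through the identity $\triangleleft_{\cL^c} = \trangleL$. Getting the direction of this reversal right, rather than any computation, is the whole substance of the argument.
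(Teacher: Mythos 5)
Your proposal is correct and follows essentially the same route as the paper's own proof: apply Proposition \ref{corollary - 2.5} to the nest $\cL^c$ and its member $X-M$, then translate $\triangleleft_{\cL^c}$-maximality into $\triangleL$-minimality via the identity $\triangleleft_{\cL^c} = \triangleright_{\cL}$. Your explicit verification that $\cL^c$ is again a $T_0$-separating nest (a hypothesis the paper's two-line proof leaves implicit) and your self-contained alternative via the preceding proposition are sound additions, but the core argument is identical.
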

\begin{proof}
We apply Proposition \ref{corollary - 2.5} for $X-M$. So, $X-M$ is a lower set with
respect to $\triangleleft_{\cL^c}$, if and only if $X-M$ has no maximal element
with respect to $\triangleleft_{\cL^c}$, which is equivalent to the fact that
$X-M$ has no minimal element with respect to $\triangleleft_{\cL}$, since
$\triangleleft_{\cL^c} = \triangleright_{\mathcal{L}}$.
\end{proof}

So, we can now give a characterization of interlocking nests, in terms
of lower sets, without the nest being necessarily $T_0$-separating.

\begin{theorem}\label{proposition-used at the end}
Let $X$ be a set and let $\cL$ be a nest on $X$. Then, $\cL$ is
interlocking, if and only if for each $L \in \cL$, if $X-L$ is
a lower set with respect to $\triangleleft_{\mathcal{L}^c}$, then
$L$ is a lower set with respect to $\triangleL$.
\end{theorem}

{\bf{Open Question 3.}}  Given the characterization of interlocking nest in terms of
lower sets, how can this lead into a restatement of the
orderability problem as (re-)stated in \cite{Good-Papadopoulos}? Will such a restatement
lead into a brand new proof?

\section{Lower and Upper Bounds of Subsets of a set $X$, in Terms of Nests.}

Here we find necessary and sufficient conditions
for the existence of lower and upper bounds, for subsets of a set
$X$, in terms of nests. A general characterization of bounded sets
was given in \cite{lambrinos-boundedness}, as a generalization of
the notion of compactness. We believe that nests can play a
dominant role to the development of this subject.

\begin{theorem}\label{proposition-X equals down Y}
Let $X$ be a set, let $\mathcal{L}$ be a nest on $X$ and let $\triangleleft_{\mathcal{L}}$
be the order induced by $\mathcal{L}$. Let also $Y \subset X$. Then, $X = \downarrow{Y}$, if and only if
there exists a cover of $X$, by elements of $\mathcal{L}$, such that there does not exist
a single-element subcover of $Y$, by this cover.
\end{theorem}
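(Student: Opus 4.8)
The plan is to reduce both sides of the biconditional to a single statement about unions and then read off the equivalence. The central tool will be Proposition \ref{proposition - downset}, which already identifies the lower set $\downarrow{Y}$ as a union of nest members that fail to contain $Y$:
\[
\downarrow{Y} = \bigcup\{L \in \mathcal{L} : Y \nsubseteq L\}.
\]
Thus the hypothesis $X = \downarrow{Y}$ is equivalent, with no further work, to the condition $X = \bigcup\{L \in \mathcal{L} : Y \nsubseteq L\}$. The real content of the theorem is therefore just the translation of the phrase on the right-hand side into set-theoretic language, so I would begin by unwinding that phrase carefully.

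First I would fix the terminology: a \emph{cover of $X$ by elements of $\mathcal{L}$} is a subfamily $\mathcal{C} \subseteq \mathcal{L}$ with $X = \bigcup \mathcal{C}$, and a \emph{single-element subcover of $Y$} by $\mathcal{C}$ is a single member $C \in \mathcal{C}$ with $Y \subseteq C$. Hence the clause ``there does not exist a single-element subcover of $Y$'' means precisely that $Y \nsubseteq C$ for every $C \in \mathcal{C}$. With this reading, the right-hand condition asserts the existence of a subfamily $\mathcal{C} \subseteq \mathcal{L}$ that covers $X$ and all of whose members fail to contain $Y$.

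For the forward direction, assuming $X = \downarrow{Y}$, I would exhibit the canonical witness $\mathcal{C} = \{L \in \mathcal{L} : Y \nsubseteq L\}$. By the reduction via Proposition \ref{proposition - downset} this $\mathcal{C}$ covers $X$, and by its very definition no member of $\mathcal{C}$ contains $Y$, so no single-element subcover of $Y$ exists. For the converse, given any cover $\mathcal{C} \subseteq \mathcal{L}$ with no single-element subcover of $Y$, every $C \in \mathcal{C}$ satisfies $Y \nsubseteq C$, whence
\[
X = \bigcup \mathcal{C} \subseteq \bigcup\{L \in \mathcal{L} : Y \nsubseteq L\} = \downarrow{Y} \subseteq X,
\]
forcing $X = \downarrow{Y}$. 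I do not expect a genuine mathematical obstacle here: once Proposition \ref{proposition - downset} is invoked, the argument is a pure unwinding of definitions. The only point demanding care is the correct interpretation of ``single-element subcover of $Y$'' as the failure of any individual cover member to contain $Y$ (rather than, say, a statement about covering a single point of $Y$); getting that reading right is what makes the canonical cover $\{L \in \mathcal{L} : Y \nsubseteq L\}$ the natural witness and renders both implications immediate.
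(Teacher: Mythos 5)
Your proof is correct, and it is organized genuinely differently from the paper's. The paper never invokes Proposition \ref{proposition - downset} here: it proves both implications by a from-scratch element-chase. For (cover $\Rightarrow$ $X = \downarrow{Y}$) it picks, for each $x \in X$, a cover member $L_j \ni x$, then a point $y \in Y - L_j$ (available because no single cover member contains $Y$), and concludes $x \triangleleft_{\mathcal{L}} y$; for the converse it builds a cover $\{L_x : x \in X\}$ by choosing, for each $x$, a witness $y_x \in Y$ and $L_x \in \mathcal{L}$ with $x \in L_x$ and $y_x \notin L_x$, and refutes any single-element subcover via the chosen $y_x$. You instead factor the whole statement through Proposition \ref{proposition - downset}, so that $X = \downarrow{Y}$ becomes $X = \bigcup\{L \in \mathcal{L} : Y \nsubseteq L\}$, and then both directions are pure set algebra: the canonical family $\{L \in \mathcal{L} : Y \nsubseteq L\}$ is the witness cover, and any cover with no single-element subcover of $Y$ is a subfamily of that canonical family, so its union is squeezed between $X$ and $\downarrow{Y}$. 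What your route buys: it makes transparent that the theorem is essentially a reformulation of Proposition \ref{proposition - downset}; your witness cover is canonical rather than built by per-point choices (the paper's converse implicitly selects $L_x$ and $y_x$ for every $x$); and the backward direction collapses to a one-line monotonicity argument. What the paper's route buys is self-containedness: its proof can be read independently of the earlier section on lower sets. Your reading of ``single-element subcover of $Y$'' as ``some cover member $C$ with $Y \subseteq C$'' matches the paper's own usage, so there is no interpretive mismatch. A shared feature worth noting: neither argument (nor the proof of Proposition \ref{proposition - downset}) ever uses that $\mathcal{L}$ is a nest, so the theorem holds verbatim for an arbitrary family $\mathcal{L} \subseteq \mathcal{P}(X)$.
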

\begin{proof}
We first suppose that there exists a cover $\{L_i:i \in I\} \subset \mathcal{L}$, for $X$,
i.e. $\bigcup_{i \in I} L_i  = X$, such that there does not exist a single-element subcover, for $Y$,
by this cover. Let $x \in X$. Then, there exists $L_j \in \{L_i : i \in I\}$, such that
$x \in L_j$. But $Y \nsubseteq L_j$. So, there exists $y \in Y$, such that $y \notin L_j$.
Thus, $x \in L_j$ and $y \notin L_j$, which gives that $x \triangleleft_{\mathcal{L}} y$,
which finally gives that $x \in \downarrow{Y}$. Consequently, $X \subset \downarrow{Y}$
and thus $X = \downarrow{Y}$.

On the other hand, let us suppose that $X = \downarrow{Y}$. So, for every $x \in X$, there
exists $y \in Y$, such that $x \triangleleft_{\mathcal{L}} y$. So, for every $x \in X$,
there exists $L_x \in \mathcal{L}$, such that $x \in L_x$ and $y \notin L_x$. The latter
implies that $\bigcup_{x \in X} L_x = X$. It now remains to prove that there does not
exist a single-element subcover for $Y$, by this cover. For this, let us suppose that there
exists a single-element subcover for $Y$, by the cover
$\{L_x : x \in X\}$. Then, $Y \subset L_x$, where $x \in X$.
But, $y_x \in Y$ implies that $y_x \in L_x$, which leads into a contradiction.
\end{proof}

\begin{remark}\label{remark - remark for upper bound}
In particular, when $Y \subsetneq X$, by $\downarrow{Y} \subsetneq X$ we mean that
there exists $x \in X$, such that $x \notin \downarrow{Y}$ or, equivalently:
\[\textrm{(there
exists } x \in X, \textrm{ such that for every } y \in Y, \,x \ntriangleleft_{\mathcal{L}} y)~~~~~~~(1)\]
If we suppose that the nest $\mathcal{L}$ $T_0$-separates $X$, then $(1)$ will be equivalent to the statement:
\[\textrm{(there exists } x \in X, \textrm{ such that, for each } y \in Y,\, y \triangleleft_{\mathcal{L}} x)\]
In other words, $Y$ has an upper bound in $X$.
\end{remark}

So, we can now extract the following:

\begin{proposition}\label{corollary-bounded}
Let $\mathcal{L}$ be a $T_0$-separating nest, on $X$. If $Y \subset X$, then $Y$ has an upper bound in $X$, that does
not belong to $Y$, if and only if for each cover of $X$, by elements of $\mathcal{L}$,
there exists a finite subcover for $Y$, by members of this cover.
\end{proposition}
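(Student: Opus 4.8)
The plan is to read the statement off from Theorem \ref{proposition-X equals down Y} and Remark \ref{remark - remark for upper bound}, after one structural observation about nests. The only genuinely new ingredient is that, for a nest, a finite subcover is no stronger than a single-element subcover, and this is what lets the word ``finite'' in the present statement match the word ``single-element'' in Theorem \ref{proposition-X equals down Y}.

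First I would record this nest observation. Suppose $L_1,\dots,L_n \in \mathcal{L}$ cover $Y$. Since $\mathcal{L}$ is a nest, the sets $L_1,\dots,L_n$ are linearly ordered by inclusion, so $\bigcup_{i=1}^{n} L_i = L_k$ for the largest of them, $L_k$. Then $Y \subseteq L_k$, so $\{L_k\}$ is already a single-element subcover for $Y$; the converse implication is trivial. Hence the stated condition, namely ``for each cover of $X$ by members of $\mathcal{L}$ there is a finite subcover for $Y$,'' is equivalent to the same statement with ``finite'' replaced by ``single-element.'' I would then negate: the failure of the single-element subcover condition says precisely that some cover of $X$ by members of $\mathcal{L}$ admits no single-element subcover for $Y$, which by Theorem \ref{proposition-X equals down Y} is exactly the assertion $X = \downarrow{Y}$. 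Thus the compactness-type condition in the proposition holds if and only if $\downarrow{Y} \subsetneq X$.

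Finally I would translate $\downarrow{Y} \subsetneq X$ into the existence of an upper bound, using $T_0$-separation. By Remark \ref{remark - remark for upper bound}, $\downarrow{Y} \subsetneq X$ is equivalent to the existence of $x \in X$ with $y \triangleL x$ for every $y \in Y$, i.e. to $Y$ having an upper bound in $X$. Because $\triangleL$ is irreflexive (no $L$ can both contain and omit the same point), such an $x$ cannot lie in $Y$, for otherwise $x \triangleL x$; hence the upper bound automatically lies outside $Y$. Chaining the three equivalences then yields the proposition.

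The point to watch is that the reduction in the first step is where the nest hypothesis does all the work, and that the $T_0$-separation hypothesis is exactly what Remark \ref{remark - remark for upper bound} needs to convert $\downarrow{Y}\subsetneq X$ into a genuine upper bound. Beyond that, I would be careful about the degenerate case $Y = X$, in which no upper bound outside $Y$ can exist: this is accommodated by the fact that Remark \ref{remark - remark for upper bound} operates under the standing assumption $Y \subsetneq X$, so I would either carry $Y \subsetneq X$ throughout or note separately that $Y = X$ forces both sides of the biconditional to fail.
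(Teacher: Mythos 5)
Your route is exactly the paper's intended one (the paper offers no written proof; it presents Proposition \ref{corollary-bounded} as an immediate extract of Theorem \ref{proposition-X equals down Y} and Remark \ref{remark - remark for upper bound}), and your first two steps are correct and indeed supply details the paper leaves implicit: for a nest, a finite union of members equals the largest member, so ``finite subcover of $Y$'' and ``single-element subcover of $Y$'' are interchangeable, and negating Theorem \ref{proposition-X equals down Y} then shows the covering condition is equivalent to $\downarrow{Y} \subsetneq X$.

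The genuine gap is in your third step. $T_0$-separation converts ``$x \ntriangleleft_{\cL} y$ for all $y \in Y$'' only into ``$y \triangleL x$ for all $y \in Y$ with $y \neq x$''; it does not exclude the possibility that the \emph{only} witnesses $x \notin \downarrow{Y}$ lie inside $Y$ (namely a $\triangleLq$-maximum of $Y$). Your irreflexivity argument (``such an $x$ cannot lie in $Y$, for otherwise $x \triangleL x$'') is circular: it presupposes $y \triangleL x$ for \emph{all} $y \in Y$, including $y = x$, which is precisely the strong form that is unavailable when $x \in Y$. Concretely, take $X = \{a,b\}$, $\cL = \{\{a\}, X\}$ (a $T_0$-separating nest, cf.\ Example \ref{example-T0 not nec C3}), and $Y = \{b\}$. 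Every cover of $X$ by members of $\cL$ must contain $X$, hence has a single-element subcover of $Y$, so the covering condition holds; yet $Y$ has no upper bound outside $Y$, because $b \triangleL a$ fails ($b$ belongs to no member of $\cL$ omitting $a$). Here $\downarrow{Y} = \{a\} \subsetneq X$, and the element witnessing this is $b \in Y$ itself: the covering condition is equivalent to $Y$ having a (possibly non-strict) $\triangleLq$-upper bound, not an upper bound outside $Y$. In fairness, this defect is inherited from Remark \ref{remark - remark for upper bound}, so the proposition as printed suffers from the same counterexample; a correct statement would drop the clause ``that does not belong to $Y$'' (or add a hypothesis ruling out a maximum of $Y$ that lies in no $L \in \cL$ omitting some point of $X$). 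Your closing remark that $Y = X$ forces both sides to fail breaks for the same reason: with $\cL = \{\{a\},X\}$ and $Y = X$, the covering condition holds (every cover contains $X$) while no upper bound outside $Y$ can exist.
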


\begin{proposition}
Let $\cL$ be a $T_0$-separating nest on $X$. Then, for every $L \in \cL$, such that $L \neq X$,
$L$ has always an upper bound in $X$, that does not belong to $X$.
\end{proposition}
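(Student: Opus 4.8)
The plan is to observe that this is essentially immediate from the definition of the induced order $\triangleL$, once the conclusion is read correctly. The phrase ``does not belong to $X$'' must be a misprint for ``does not belong to $L$,'' matching the terminology of Proposition \ref{corollary-bounded} and Remark \ref{remark - remark for upper bound}, where the object of interest is precisely an upper bound lying \emph{outside} the given subset. With that reading, I would first exploit the hypothesis $L \neq X$ to produce a witness: since $L \subsetneq X$, the complement $X - L$ is nonempty, so I fix some $x \in X - L$.

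Next I would verify that this $x$ is the required upper bound. Taking an arbitrary $y \in L$, I have $y \in L$ while $x \notin L$, so Definition \ref{definition-order} yields $y \triangleL x$, and in particular $y \triangleLq x$. As $y \in L$ was arbitrary, $x$ is an upper bound of $L$ with respect to $\triangleLq$, and by the choice of $x$ it satisfies $x \notin L$. This establishes the claim directly.

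I do not anticipate any genuine obstacle: the argument uses only the defining property of $\triangleL$, and the existence of the bound $x$ is forced purely by $L \neq X$. The $T_0$-separating assumption plays the background role of guaranteeing, via the remark following Definition \ref{definition-order}, that $\triangleLq$ is a linear order, so that ``upper bound'' carries its usual meaning; it is not needed to locate the witness. One could alternatively route through the covering characterization of Proposition \ref{corollary-bounded} by setting $Y = L$ and checking that every cover of $X$ by members of $\cL$ admits a finite subcover for $L$, but this is a more roundabout path than the explicit witness above, so I would not pursue it.
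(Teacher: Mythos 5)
Your proof is correct and your reading of the misprint is the right one (``does not belong to $X$'' should be ``does not belong to $L$'', in line with Proposition \ref{corollary-bounded}), but your route is genuinely different from the paper's. The paper produces no explicit witness: it invokes Proposition \ref{corollary - preliminary to cor upper bound} to get $\downarrow{L} = \bigcup\{M \in \cL : M \subsetneq L\} \subseteq L \subsetneq X$, and then applies Remark \ref{remark - remark for upper bound}, where $T_0$-separation is used essentially to upgrade ``$x \ntriangleleft_{\mathcal{L}} y$ for every $y \in L$'' to ``$y \triangleL x$ for every $y \in L$''. Your argument instead fixes any $x \in X-L$ and observes that $L$ itself is the separating set giving $y \triangleL x$ for all $y \in L$. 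This is shorter and strictly more informative: it shows that every point of $X-L$ is an upper bound, it makes the ``not in $L$'' clause automatic (a point the paper's proof never explicitly checks --- a point outside $\downarrow{L}$ could a priori be a maximum element of $L$), and it shows that $T_0$-separation is not needed at all, so the proposition holds for an arbitrary nest, the separation hypothesis serving only to make $\triangleLq$ linear. What the paper's approach buys in exchange is coherence with the section's machinery: it exhibits the proposition as a formal consequence of the down-set computation and the boundedness remark, which is the theme being developed there, whereas your proof stands alone.
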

\begin{proof}
By Proposition \ref{corollary - preliminary to cor upper bound},
$\downarrow{L} = \bigcup \{M \in \cL : M \subset L\} \subsetneq X$. So,
by Remark \ref{remark - remark for upper bound}, $L$ has an upper bound in $X$.
\end{proof}

\begin{remark}
The characterization of Proposition \ref{corollary-bounded} is identical
to the characterization of bounded subsets of a given set
in topological spaces (see \cite{lambrinos-boundedness}). More specifically, if $(X,\mathcal{T})$ is a topological
space and if $A \subset X$, then $A$ is {\em bounded} in $X$, if and only if
for each open cover of $X$, there exists a finite subcover for $A$,
by members of this cover. So, we can say that the subsets of $X$, which have an upper bound on $X$,
are exactly the bounded subsets of $X$.
\end{remark}

\begin{proposition}
Let $X$ be a set, let $\mathcal{L}$ be a nest on $X$ and let $\triangleL$ be
the order induced by $\cL$. Let also $Y \subset X$. Then, $X = \uparrow{Y}$,
if and only if there exists a nest $\cL' = \{L_i : i \in I\} \subset \cL$, with the
property that $\bigcap_{i \in I} L_i = \emptyset$, such that there does not
exist an $L_i \in \cL'$, with the property that
$Y \cap L_i = \emptyset$.
\end{proposition}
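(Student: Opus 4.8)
The plan is to reduce everything to the explicit formula for $\uparrow{Y}$ furnished by Proposition \ref{proposition-for the proof}, namely $\uparrow{Y} = \bigcup_{L \in \cL}\{X-L : Y \cap L \neq \emptyset\}$, and then to translate the condition $X = \uparrow{Y}$ into a statement about the intersection of a subfamily of $\cL$ by passing to complements. This proposition is the exact dual of Theorem \ref{proposition-X equals down Y}, but it is even cleaner to handle because Proposition \ref{proposition-for the proof} already expresses $\uparrow{Y}$ as a union of complements of members of $\cL$.

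First I would introduce the subfamily $\mathcal{F} = \{L \in \cL : Y \cap L \neq \emptyset\}$, so that Proposition \ref{proposition-for the proof} reads $\uparrow{Y} = \bigcup_{L \in \mathcal{F}} (X-L)$. By De Morgan's law this equals $X - \bigcap_{L \in \mathcal{F}} L$, and hence $X = \uparrow{Y}$ if and only if $\bigcap_{L \in \mathcal{F}} L = \emptyset$. This single equivalence is the engine of the whole argument; both directions of the biconditional then amount to bookkeeping about which subfamily of $\cL$ witnesses the empty intersection.

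For the forward direction, assuming $X = \uparrow{Y}$, I would simply take $\cL' = \mathcal{F}$. Being a subfamily of the nest $\cL$, it is itself a nest (any two of its members are comparable); its intersection is empty by the equivalence just established; and every member of $\mathcal{F}$ meets $Y$ by construction, so no $L_i \in \cL'$ satisfies $Y \cap L_i = \emptyset$, exactly as required. For the converse, given any $\cL' \subset \cL$ with $\bigcap \cL' = \emptyset$ all of whose members meet $Y$, the hypothesis that each member meets $Y$ says precisely that $\cL' \subseteq \mathcal{F}$. Intersecting over a larger index family can only shrink the intersection, so $\bigcap_{L \in \mathcal{F}} L \subseteq \bigcap \cL' = \emptyset$, and the equivalence yields $X = \uparrow{Y}$.

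I do not expect a genuine obstacle here: once Proposition \ref{proposition-for the proof} is in hand, the whole matter is a one-line complementation together with the monotonicity of intersections. The only point demanding a moment's care is the direction of that monotonicity in the converse, $\cL' \subseteq \mathcal{F} \Rightarrow \bigcap_{L \in \mathcal{F}} L \subseteq \bigcap \cL'$, where one must be careful not to confuse ``intersecting more sets'' with ``obtaining a larger intersection.''
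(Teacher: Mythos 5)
Your proof is correct, but it follows a genuinely different route from the paper's. The paper argues from scratch by element-chasing: assuming the existence of $\cL'$, it picks for each $x \in X$ some $L_i \in \cL'$ with $x \notin L_i$, uses $Y \cap L_i \neq \emptyset$ to produce $y \in Y$ with $y \triangleL x$, and concludes $x \in \uparrow{Y}$; conversely, assuming $X = \uparrow{Y}$, it constructs a pointwise-chosen witness family $\{L_x : x \in X\}$, where each $L_x$ contains some $y_x \in Y$ but not $x$, and verifies directly that this family has empty intersection and that each member meets $Y$. You instead factor the whole statement through Proposition \ref{proposition-for the proof}: writing $\mathcal{F} = \{L \in \cL : Y \cap L \neq \emptyset\}$, you get $\uparrow{Y} = \bigcup_{L \in \mathcal{F}} (X-L) = X - \bigcap_{L \in \mathcal{F}} L$, so that $X = \uparrow{Y}$ is equivalent to $\bigcap_{L \in \mathcal{F}} L = \emptyset$; the forward direction then takes $\cL' = \mathcal{F}$ itself (the maximal witness rather than the paper's pointwise one), and the converse is just the monotonicity $\cL' \subseteq \mathcal{F} \Rightarrow \bigcap_{L \in \mathcal{F}} L \subseteq \bigcap_{L \in \cL'} L$, which you state with the correct orientation. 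Your approach buys brevity, reuse of an established lemma, and it makes transparent that this proposition is the De Morgan dual of Theorem \ref{proposition-X equals down Y}; the paper's approach buys self-containedness (it never invokes Proposition \ref{proposition-for the proof}, which lives in an earlier section) and exhibits a concrete witness nest indexed by the points of $X$. One small point you could make explicit: when $\mathcal{F} = \emptyset$ the De Morgan identity rests on the convention that an intersection over an empty family equals $X$; this degenerate case is harmless (in the converse direction any admissible $\cL'$ is nonempty, hence so is $\mathcal{F}$), and the paper's proof passes over the same case silently.
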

\begin{proof}
We first suppose that there exists a nest $\cL' = \{L_i : i \in I\} \subset \cL$
with the property $\bigcap_{i \in I} L_i = \emptyset$, such that there does not
exist an $L_i \in \cL'$, with the property
$Y \cap L_i = \emptyset$. Let $x \in X$. Then, there exists
$i \in I$, such that $x \notin L_i$. So, $Y \cap L_i \neq \emptyset$ implies
that there exists $y \in Y$, such that $y \in L_i$. So, there exists $y \in Y$,
such that $y \in L_i$ and $x \notin L_i$, i.e. $y \triangleL x$. Thus,
$X \subset \uparrow{Y}$, which proves that $X = \uparrow{Y}$.

Conversely, let $X = \uparrow{Y}$. Then, for each $x \in X$, there exists
$y_x \in Y$, such that $y_x \triangleL x$. Thus, for every $x \in X$, there
exists $L_x \in \cL$, such that $y_x \in L_x$ and $x \notin L_x$. Let $\cL' = \{L_x : x \in X\}$.
We observe that $\bigcap_{x \in X}L_x = \emptyset$. If we suppose that there exists
an $L_i \in \cL'$ with the
property $Y \cap L_i = \emptyset$, then $Y \cap L_i = \emptyset$.
Thus, $y_i \in Y$, such that $y_i \in L_i$, a contradiction.
\end{proof}

\begin{remark}
In particular, by $\uparrow{Y} \subsetneq X$, we mean that there exists
$x \in X$, such that $x \notin \uparrow{Y}$ or, equivalently:\\
(there exists $x \in X$, such that for every $y \in Y$, $y \ntriangleleft_{\mathcal{L}} x$)~~~(*) \\
In particular, if $\cL$ is a nest that $T_0$-separates $X$, then (*) will take the form: \\
(there exists $x \in X$, such that for each $y \in Y$, $x \triangleL y$)\\
In other words, $Y$ has a lower bound in $X$.
\end{remark}

\begin{corollary}\label{corollary - prel to cor on upper bound}
Let $X$ be a set and let $\cL$ be a $T_0$-separating nest on $X$. If $Y \subset X$,
then $Y$ has a lower bound in $X$ that does not belong to $Y$, if and only if for each nest $\cL' = \{L_i: i \in I\} \subset \cL$,
such that $\bigcap L_i = \emptyset$, there exists an $L_i \in \cL'$, such that $Y \cap L_i = \emptyset$.
\end{corollary}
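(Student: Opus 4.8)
The plan is to read off the statement as the contrapositive of the proposition immediately preceding it, combined with the translation recorded in the remark just above. First I would recall that the preceding proposition asserts $X = \uparrow{Y}$ if and only if there exists a nest $\cL' = \{L_i : i \in I\} \subseteq \cL$ with $\bigcap_{i \in I} L_i = \emptyset$ such that \emph{no} $L_i \in \cL'$ satisfies $Y \cap L_i = \emptyset$. Since $\uparrow{Y} \subseteq X$ always holds, negating the left-hand side gives exactly $\uparrow{Y} \subsetneq X$; so my first move is to negate both sides of that biconditional.

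The heart of the argument is then a De Morgan rewriting of the right-hand side. Negating ``there is a nest $\cL'$ with $\bigcap_{i \in I} L_i = \emptyset$ and no member disjoint from $Y$'' yields ``for every nest $\cL' \subseteq \cL$, either $\bigcap_{i \in I} L_i \neq \emptyset$ or some $L_i$ is disjoint from $Y$'', which I would immediately recast in conditional form: for every nest $\cL' = \{L_i : i \in I\} \subseteq \cL$ with $\bigcap_{i \in I} L_i = \emptyset$ there exists $L_i \in \cL'$ with $Y \cap L_i = \emptyset$. This is verbatim the condition in the corollary, so after this step the biconditional reads ``$\uparrow{Y} \subsetneq X$ if and only if that condition holds.''

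It remains to identify $\uparrow{Y} \subsetneq X$ with the existence of a lower bound of $Y$ outside $Y$. Here I would invoke the preceding remark: unwinding $\uparrow{Y} \subsetneq X$ as the existence of some $x \in X$ with $x \notin \uparrow{Y}$, that is $y \ntriangleleft_{\cL} x$ for every $y \in Y$, and using that the $T_0$-separation of $\cL$ makes $\triangleLq$ a linear order, this is equivalent to the existence of an $x \in X$ with $x \triangleL y$ for all $y \in Y$, i.e. to $Y$ having a lower bound. Finally, since $\triangleL$ is irreflexive---no $L \in \cL$ can simultaneously contain and omit a point, so $x \triangleL x$ never holds---any such strict lower bound $x$ cannot itself lie in $Y$, which supplies the clause ``that does not belong to $Y$.''

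I expect the only delicate point to be the translation supplied by the remark: one must use linearity of $\triangleLq$ to upgrade the negative statement $y \ntriangleleft_{\cL} x$ into the positive $x \triangleL y$ for each $y \in Y$, taking care that the witness $x$ genuinely sits below $Y$ rather than merely being incomparable to or minimal within it. Everything else is a routine contrapositive together with the elementary De Morgan manipulation of the nested quantifiers, so no further computation is needed. This result is, moreover, the order-theoretic dual of Proposition \ref{corollary-bounded}, and one could alternatively derive it by applying that proposition to the dual nest $\cL^c$, using $\triangleleft_{\cL^c} = \triangleright_{\cL}$.
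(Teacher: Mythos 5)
Your route is exactly the paper's own (implicit) derivation---the corollary is stated there without proof, as the negation of the preceding proposition combined with the translation in the remark---so there is no divergence of approach. The problem is that the step you yourself flag as delicate is a genuine gap, and it cannot be closed. Linearity upgrades ``$y \ntriangleleft_{\mathcal{L}} x$ for all $y \in Y$'' to ``$x \triangleL y$ for all $y \in Y$'' only when $x \notin Y$: if $x \in Y$, the instance $y = x$ is killed by irreflexivity. Negating the proposition yields only a witness $x \notin \uparrow{Y}$, and such a witness may be forced to lie inside $Y$ (namely, to be the $\triangleLq$-minimum of $Y$), in which case $Y$ need have no lower bound outside $Y$ at all. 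Concretely, take $X=\{a,b,c\}$ and $\cL = \{\{a\},\{a,b\}\}$, a $T_0$-separating nest inducing $a \triangleL b \triangleL c$, with $Y=\{a,b\}$. Every nonempty subnest of $\cL$ contains $a$ in its intersection, so no subnest has empty intersection and the covering condition holds vacuously; yet the only lower bound of $Y$ is $a$, which belongs to $Y$. Thus the right-hand side of the corollary is true while the left-hand side is false: the implication from the covering condition to a lower bound outside $Y$ fails. (The converse direction of your argument is sound: a lower bound of $Y$ outside $Y$ gives $x \notin \uparrow{Y}$ by asymmetry, hence the covering condition via the negated proposition.)

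To be fair, this defect is not one you introduced: the paper's remark makes the same unqualified claim that $(*)$ ``will take the form'' of the existence of a strict lower bound, and the corollary inherits it, so you have faithfully reconstructed the intended argument, gap included. What the negated proposition actually gives is ``$\uparrow{Y} \neq X$ if and only if the covering condition,'' and under $T_0$-separation $x \notin \uparrow{Y}$ says precisely that $x$ is a $\triangleLq$-lower bound of $Y$, possibly equal to the minimum of $Y$ itself. So the statement becomes correct if one drops the clause ``that does not belong to $Y$'' and reads ``lower bound'' with respect to $\triangleLq$; as written, its left-hand side would have to be replaced by $Y \cup \uparrow{Y} \neq X$. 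The same caveat applies to your proposed alternative derivation from Proposition \ref{corollary-bounded} via the dual nest $\cL^c$ and $\triangleleft_{\cL^c} = \triangleright_{\cL}$: that proposition carries the mirror-image flaw (a set $Y$ possessing a maximum but no strict upper bound), so dualizing does not repair the step.
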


\begin{corollary}\label{corollary - for next chapter}
Let $X$ be a set and let $\cL$ be a $T_0$-separating nest, on $X$. Then,
each $L \in \cL$ has no lower bound in $X$, that does not belong to $X$.
\end{corollary}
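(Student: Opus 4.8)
The plan is to prove the statement directly from the linearity of the induced order, rather than to force the covering characterization of Corollary~\ref{corollary - prel to cor on upper bound} through all of its boundary cases. I read the closing clause as ``that does not belong to $L$'', in parallel with the upper-bound proposition stated just above, so the content is that no member of $\cL$ admits a lower bound lying outside itself. First I would recall that, in the sense fixed by the Remark preceding Corollary~\ref{corollary - prel to cor on upper bound}, a \emph{lower bound} of a set $Y$ is a point $x\in X$ with $x\triangleL y$ for every $y\in Y$, the inequality being the strict relation $\triangleL$. Since $\cL$ is $T_0$-separating, the induced relation $\triangleLq$ is a linear order on $X$, so $\triangleL$ is irreflexive and antisymmetric; this is essentially the only external fact the argument rests on.

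Fix a nonempty $L\in\cL$ and suppose, for contradiction, that $x\in X$ is a lower bound of $L$. The first step is to observe that any such $x$ lies automatically outside $L$: if $x\in L$, then applying the defining condition to $y=x$ would yield $x\triangleL x$, contradicting irreflexivity. The second and key step is the elementary separation fact for a single member of a nest: whenever $x\notin L$ and $y\in L$, the set $L$ itself witnesses $y\triangleL x$, since $y\in L$ and $x\notin L$. Choosing any $y_0\in L$ (possible because $L\neq\emptyset$), I would then have simultaneously $x\triangleL y_0$ (as $x$ is a lower bound) and $y_0\triangleL x$ (by the witness $L$), which is impossible by antisymmetry of the linear order $\triangleLq$. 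Hence $L$ has no lower bound at all, and in particular none outside $L$, which is the assertion of the corollary.

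Because the argument is short, the only real obstacle is bookkeeping at the boundary. One must keep the strict reading of \emph{lower bound} distinct from a reflexive one: under the strict reading every lower bound is forced outside $L$, so the clause ``that does not belong to $L$'' is automatically satisfied, and the statement really asserts that a nonempty $L$ has no lower bound whatsoever, the exact dual of the existence of upper bounds for $L\neq X$. The degenerate case $L=\emptyset$ must be excluded (or $\emptyset$ assumed absent from $\cL$), since every point of $X$ is vacuously a lower bound of the empty set. Finally, I would note that the conclusion can also be read off Corollary~\ref{corollary - prel to cor on upper bound} by verifying that its covering condition fails; but that route forces one to treat separately the case in which $L$ possesses a $\triangleLq$-minimal element, whereas the direct argument above dispatches all such cases uniformly.
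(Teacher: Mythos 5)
Your argument is correct, and you have also read the statement correctly: the closing clause ``does not belong to $X$'' must be a typo for ``does not belong to $L$'' (the same slip appears in the preceding proposition on upper bounds). Your route, however, is not the one the paper takes. The paper's proof is a one-line citation of Proposition~\ref{corollary - preliminary to cor upper bound}: for $M \in \cL$ one has $\downarrow{M} = \bigcup\{L \in \cL : L \subsetneq M\} \subseteq M$, so a strict lower bound of a nonempty $M$ lying outside $M$ --- which would be a point of $\downarrow{M}$ not in $M$ --- cannot exist. You instead argue from scratch: irreflexivity forces any strict lower bound $x$ of $L$ outside $L$, and then $x \triangleL y_0$ (from the lower-bound property) together with $y_0 \triangleL x$ (witnessed by $L$ itself, since $y_0 \in L$ and $x \notin L$) contradicts asymmetry of $\triangleL$. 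Mathematically these are two phrasings of the same nest fact --- nothing outside $L$ can sit $\triangleL$-below anything inside $L$ --- so neither is more general, but your version buys three things: it is self-contained (asymmetry holds for \emph{any} nest, exactly by the argument in the paper's footnote to Lemma~\ref{lema-lemmas 1 and 2}, so $T_0$-separation is never really used); it makes the strict reading of ``lower bound'' and the resulting automatic exclusion from $L$ explicit; and it flags the genuine degenerate case $L = \emptyset$, where the statement fails as written (every point of $X$ is vacuously a lower bound of $\emptyset$) and which the paper's terse proof passes over in silence. Your closing remark is also sound: deriving the result from the covering characterization of Corollary~\ref{corollary - prel to cor on upper bound} would be clumsier, since that characterization itself behaves badly in vacuous cases.
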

\begin{proof}
The proof follows immediately from Proposition \ref{corollary - preliminary to cor upper bound}
and the fact that $\cL$ is a nest.
\end{proof}

\section{Nests, Groups and Topological Groups.}

We consider the order $\triangleL$, on a group $(G,*)$, which is
generated by a $T_0$-separating nest of sets, in $G$, and we give
conditions which will make the order compatible with the group operation,
$*$.

Let $(G,*)$ be a group, with operation $*$, and let $\mathcal{L}$ be a $T_0$-separating
nest, on $G$. For every $x,y \in G$, $x \triangleleft_{\mathcal{L}} y$, if and only
if there exists $L \in \mathcal{L}$, such that $x \in L$ and $y \notin L$. The order $\triangleleft_{\mathcal{L}}$
is said to be {\em compatible} with the group operation $*$, if and only if for every $a,b$ and $g$, in $G$, the following hold:
\[a \triangleleft_{\mathcal{L}} b \Leftrightarrow\]\[ a * g \triangleleft_{\mathcal{L}} b * g\] and
\[g * a \triangleleft_{\mathcal{L}} g * b.\]

\begin{proposition}\label{proposition-order compatible with *}
Let $(G,*)$ be a group and let $\mathcal{L}$ be a $T_0$-separating
nest on $G$.
If for every $g \in G$, for every $L \in \mathcal{L}$:
\[g * L \in \mathcal{L}\] and
\[L * g \in \mathcal{L};\]
equivalently, if the maps:
\[t: \mathcal{L} \times G \to \mathcal{L}, \textrm{ where } t(L,g)= L * g\] and
\[s: G \times \mathcal{L} \to \mathcal{L}, \textrm{ where } s(g,L) = g * L\]
are well-defined, then $\triangleleft_{\mathcal{L}}$ is compatible with $*$.
\end{proposition}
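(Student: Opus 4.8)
The plan is to establish the two required equivalences --- right-invariance $a \triangleleft_{\mathcal{L}} b \Leftrightarrow a*g \triangleleft_{\mathcal{L}} b*g$ and left-invariance $a \triangleleft_{\mathcal{L}} b \Leftrightarrow g*a \triangleleft_{\mathcal{L}} g*b$ --- by reducing each to a single membership lemma: for every $x,g \in G$ and every $L \in \mathcal{L}$, one has $x \in L$ if and only if $x*g \in L*g$, and dually $x \in L$ if and only if $g*x \in g*L$ (where $L*g = \{\ell*g : \ell \in L\}$ and $g*L = \{g*\ell : \ell \in L\}$). Granting this lemma together with the hypothesis that translation keeps us inside $\mathcal{L}$, the proposition becomes almost immediate.

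First I would prove the right-invariance. For the forward direction, assume $a \triangleleft_{\mathcal{L}} b$, so there is $L \in \mathcal{L}$ with $a \in L$ and $b \notin L$. Set $L' := L*g$, which lies in $\mathcal{L}$ by hypothesis (equivalently, because $t$ is well-defined). By the membership lemma $a*g \in L'$ while $b*g \notin L'$, whence $a*g \triangleleft_{\mathcal{L}} b*g$. For the reverse direction I would simply apply the forward implication already proved, but with the group element $g^{-1}$ in place of $g$ and the pair $(a*g,\,b*g)$ in place of $(a,b)$; since the hypothesis quantifies over all elements of $G$, it applies to $g^{-1}$, and associativity collapses $a*g*g^{-1}$ to $a$ and $b*g*g^{-1}$ to $b$, yielding $a \triangleleft_{\mathcal{L}} b$. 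The left-invariance is proved in exactly the same fashion, replacing $t$ by $s$, right multiplication by left multiplication, and right cancellation by left cancellation. Proving both separate equivalences at once delivers whichever reading of the combined compatibility condition one prefers.

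The only real content lies in the membership lemma, and this is where I would be most careful. The implication $x \in L \Rightarrow x*g \in L*g$ is trivial from the definition of $L*g$. The converse --- that $x*g \in L*g$ forces $x \in L$, or contrapositively that $b \notin L$ forces $b*g \notin L*g$ --- is the step that genuinely uses the group structure: if $x*g = \ell*g$ for some $\ell \in L$, then right cancellation (i.e. multiplication by $g^{-1}$) gives $x = \ell \in L$. This is precisely the point at which invertibility is indispensable; in a mere cancellative monoid the passage to $g^{-1}$ used in the reverse direction would be unavailable, and the statement could fail.

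I note finally that the $T_0$-separation hypothesis plays no role in verifying the equivalences themselves: it serves only to guarantee that $\triangleleft_{\mathcal{L}}$ is a genuine (linear) order, so that ``compatible with $*$'' carries the expected meaning of an ordered-group structure. The substantive hypothesis is the translation-invariance of the nest, $g*L \in \mathcal{L}$ and $L*g \in \mathcal{L}$, and the computation above shows this is exactly what transfers to invariance of the induced order.
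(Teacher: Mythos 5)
Your proposal is correct and follows essentially the same route as the paper: both use the translated set $L*g \in \mathcal{L}$ (resp.\ $g*L$) as the witness set and verify $b*g \notin L*g$ by cancelling with $g^{-1}$. Your write-up is in fact slightly more complete, since you explicitly derive the reverse implication of the compatibility biconditional by applying the forward implication to $g^{-1}$, a step the paper's proof leaves implicit.
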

\begin{proof}
Let $e \in G$ denote the identity element of $G$, with respect to $*$.
Let, for every $g \in G$ and for every $L \in \mathcal{L}$, $g * L \in \mathcal{L}$ and
$L * g \in \mathcal{L}$. Let $a,b \in G$, such that $a \triangleleft_{\mathcal{L}} b$,
and let also $g \in G$. We prove that $a * g \triangleleft_{\mathcal{L}} b * g$. But,
since $a \triangleleft_{\mathcal{L}} b$, there exists $L \in \mathcal{L}$, such that
$a \in L$ and $b \notin L$. Furthermore, $a \in L$ implies that $a * g \in L * g$ and
$b \notin L$ implies that $b * g \notin L * g$, because if $b * g$ belonged
to $L * g$, then $(b * g) * g^{-1} \in (L * g) * g^{-1}$, which would imply
that $b * e \in L$, which would then imply that $b \in L$, a contradiction. Finally, $a * g \triangleleft_{\mathcal{L}} b * g$.
In a similar way we prove that $g * a \triangleleft_{\mathcal{L}} g * b$.
\end{proof}

\begin{example}
Let $(\mathbb{R},+)$ be the group of the real numbers, under addition. Then, $\mathcal{L} = \{(-\infty,a): a \in \mathbb{R}\}$
is obviously a $T_0$-separating nest on $\mathbb{R}$, and we observe that for every
$b \in \mathbb{R}$, $b+(-\infty,a) = (-\infty,a+b) \in \mathcal{L}$. So, $\triangleleft_{\mathcal{L}}$
is compatible, with respect to $+$.
\end{example}

\begin{example}
Consider the abelian group $(\mathbb{R}-\{0\},\times)$, of the non-zero
real numbers, endowed with the operation of multiplication. Obviously, $\mathcal{L} = \{(-\infty,a): a \in \mathbb{R}\}$ is
a $T_0$-separating nest, on $\mathbb{R}$. We remark that if $b \in \mathbb{R}$,
such that $b \triangleright_{\mathcal{L}} 0$, then $(-\infty,a) \times b = (-\infty,a \times b) \in \mathcal{L}$,
but if $b \triangleleft_{\mathcal{L}} 0$, then $(-\infty,a) \times b = (a \times b,\infty) \notin \mathcal{L}$. So,
$\triangleleft_{\mathcal{L}}$ is not compatible, with respect to $\times$.
\end{example}

{{\bf Open Question 4.}} Since the above examples refer to the connection between
properties of nests and abelian groups in particular, it might be interesting to investigate
examples of non-abelian groups, and see what
topological properties, if any, does $\triangleL$ bring to the structure of a non-abelian group.

We will now make the problem a bit more difficult.

\begin{proposition}
Let $(G,*)$ be a group. Let also $\mathcal{L}$ and $\mathcal{R}$
be families of subsets of $G$. Suppose that the following two conditions are satisfied:
\begin{enumerate}

\item For every $L \in \mathcal{L}$, $L^{-1} \in \mathcal{R}$.

\item For every $R \in \mathcal{R}$, $R^{-1} \in \mathcal{L}$.

\end{enumerate}

If we consider the topology generated by $\mathcal{L} \cup \mathcal{R}$,
then the map $f : G \to G$, where $f(x) = x^{-1}$, will be continuous.
\end{proposition}
\begin{proof}
Let $L \in \mathcal{L}$. Then:
\begin{eqnarray*}
f^{-1}(L) &&= \{x \in G : f(x) \in L\}\\
&&=\{x \in G : x^{-1} \in L\}\\
&&=L^{-1} \in \mathcal{R}.
\end{eqnarray*}

Similarly, if $R \in \mathcal{R}$, then $f(R) = R^{-1} \in \mathcal{L}$.
\end{proof}

\begin{proposition}\label{proposition-top group}
Let $(G,*)$ be a group. Let also $\mathcal{L}$ and $\mathcal{R}$
be families of subsets of $G$. Suppose that the following two conditions are satisfied:
\begin{enumerate}

\item If $x * y \in L \in \mathcal{L}$, then there exist $L_x,L_y \in \mathcal{L}$,
such that $x \in L_x, y \in L_y$ and $L_x * L_y \subset L$.

\item If $x * y \in R \in \mathcal{R}$, then there exist $R_x,R_y \in \mathcal{R}$,
such that $x \in R_x, y \in R_y$ and $R_x * R_y \subset R$.

\end{enumerate}

If we consider the topology generated by $\mathcal{L} \cup \mathcal{R}$,then
the map $f : G \times G \to G$, where $f(x,y) = x * y$, will be continuous.
\end{proposition}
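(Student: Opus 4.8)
The plan is to verify continuity of $f$ by checking only the preimages of subbasic open sets. This suffices because $f^{-1}$ preserves arbitrary unions and finite intersections, and every open set of the topology generated by $\mathcal{L} \cup \mathcal{R}$ is a union of finite intersections of members of $\mathcal{L} \cup \mathcal{R}$; hence if $f^{-1}(S)$ is open for every subbasic $S$, then $f^{-1}$ of any open set is a union of finite intersections of open sets, which is open. So I would reduce the problem to showing that for each $S \in \mathcal{L} \cup \mathcal{R}$, the set $f^{-1}(S) = \{(x,y) \in G \times G : x * y \in S\}$ is open in the product topology on $G \times G$.

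First I would treat the case $S = L \in \mathcal{L}$. Fix an arbitrary point $(x,y) \in f^{-1}(L)$, so that $x * y \in L$. Hypothesis 1 then supplies $L_x, L_y \in \mathcal{L}$ with $x \in L_x$, $y \in L_y$ and $L_x * L_y \subset L$. Since the members of $\mathcal{L}$ are subbasic, hence open, in $G$, the rectangle $L_x \times L_y$ is an open neighbourhood of $(x,y)$ in $G \times G$; and $f(L_x \times L_y) = L_x * L_y \subset L$ gives $L_x \times L_y \subset f^{-1}(L)$. As $(x,y)$ was arbitrary, $f^{-1}(L)$ is open. Next I would treat the case $S = R \in \mathcal{R}$ in exactly the same way, now invoking hypothesis 2 to produce $R_x, R_y \in \mathcal{R}$ with $x \in R_x$, $y \in R_y$ and $R_x * R_y \subset R$, so that $R_x \times R_y$ is an open neighbourhood of $(x,y)$ contained in $f^{-1}(R)$. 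Combining the two cases, the preimage of every subbasic open set is open, and therefore $f$ is continuous.

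There is no serious obstacle here: hypotheses 1 and 2 are precisely the subbasic reformulation of the familiar requirement that every neighbourhood of $x * y$ contain the product of some neighbourhoods of $x$ and $y$, and the argument above is just the standard neighbourhood proof of joint continuity of multiplication. The only points deserving care are the reduction to the subbase $\mathcal{L} \cup \mathcal{R}$, which lets one avoid arguing directly about arbitrary basic sets (these would force one to intersect finitely many rectangles and re-apply the hypotheses to the intersection), and the observation that the members of $\mathcal{L}$ and of $\mathcal{R}$ are genuinely open in $G$, so that the rectangles $L_x \times L_y$ and $R_x \times R_y$ are indeed open in the product topology.
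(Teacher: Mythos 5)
Your proposal is correct and follows essentially the same route as the paper: reduce continuity to checking preimages of the subbasic sets $\mathcal{L}\cup\mathcal{R}$, then apply hypotheses 1 and 2 at each point of $f^{-1}(L)$ (resp.\ $f^{-1}(R)$) to produce an open rectangle $L_x\times L_y$ (resp.\ $R_x\times R_y$) contained in the preimage. The paper merely phrases the same argument as an explicit equality $f^{-1}(L)=\bigcup_{x*y\in L}\bigl[\pi_1^{-1}(L_x)\cap\pi_2^{-1}(L_y)\bigr]$, which is your pointwise neighbourhood argument written as a union.
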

\begin{proof}
Let $L \in \mathcal{L}$. Then, $f^{-1}(L) = \{(x,y) \in G \times G : x * y \in L\}$.
Statement 1. gives that for every $(x,y) \in G \times G$, such that $x * y \in L$,
there exist $L_x,L_y \in \mathcal{L}$, such that $x \in L_x$, $y \in L_y$ and $L_x * L_y \subset L$,
which implies that:
\[L_x \times L_y \subset f^{-1}(L).~~~~~(1)\]
Indeed:
\begin{eqnarray*}
(a,b) \in L_x \times L_y  && \Rightarrow \\
a \in L_x, b \in L_y  && \Rightarrow \\
a * b \in L_x * L_y && \Rightarrow \\
a * b \in L.
\end{eqnarray*}

It is also true that:
\[\pi_1^{-1}(L_x) \cap \pi_2^{-1}(L_y) \subset L_x \times L_y~~~~~(2),\]
where $\pi_1^{-1}(L_x)$ and $\pi_2^{-1}(L_y)$ are the inverse projections,
which give the usual product topology, in $G \times G$.

Indeed:
\begin{eqnarray*}
(a,b) \in \pi_1^{-1}(L_x) \cap \pi_2^{-1}(L_y) && \Rightarrow \\
a \in L_x, b \in L_y && \Rightarrow \\
(a,b) \in L_x \times L_y
\end{eqnarray*}

So, $(1)$ and $(2)$ give that $\pi_1^{-1}(L_x) \cap \pi_2^{-1}(L_y) \subset f^{-1}(L)$.
The latter implies that:
\[\bigcup_{x * y \in L} [\pi_1^{-1}(L_x) \cap \pi_2^{-1}(L_y)] \subset f^{-1}(L)~~~~~(3).\]

But, it also holds that:
\[f^{-1}(L) \subset \bigcup_{x * y \in L} \pi_1^{-1}(L_x) \cap \pi_2^{-1}(L_y)~~~~~(4).\]

Indeed:
\begin{eqnarray*}
(a,b) \in f^{-1}(L) && \Rightarrow \\
f(a,b) \in L && \Rightarrow \\
a * b \in L && \Rightarrow \\
\exists \, L_a,L_b \in \mathcal{L},\,a \in L_a,b \in L_b,\,L_a * L_b \subset L && \Rightarrow \\
(a,b) \in \pi_1^{-1}(L_a) \cap \pi_2^{-1}(L_b) \subset \bigcup_{x * y \in L} [\pi_1^{-1}(L_x) \cap \pi_2^{-1}(L_y)]
\end{eqnarray*}

So, $(3)$ and $(4)$ finally give that:
\[f^{-1}(L) = \bigcup_{x * y \in L} [\pi_1^{-1}(L_x) \cap \pi_2^{-1}(L_y)].\]
and we conclude that $f^{-1}(L)$ is open in $G \times G$. In a similar way,
$f^{-1}(R)$ is open in $G \times G$, too.
\end{proof}

{{\bf Open Question 5.}} Proposition \ref{proposition-top group} refers to any family
of subsets of a set $X$. Will it be possible to prove it by restricting it only to properties of nests? This will hopefully give a characterization of topological groups,
with the involvement of order-theoretic and topological properties nests.

{{\bf Open Question 6.}} We will finally summarize a list of problems worth looking at, concerning
ordered groups. Can nests play any significant role in order to give interesting answers to them?

1.  Suppose $(X,\tau)$ be a topological space. If (and only if) $\tau$
 satisfies a condition $P$ then there is a group operation $*$ on $X$ such that
 $(X,*,\tau)$ is a topological group.

2. Suppose $(X,\tau)$ be a topological space. If (and only if) $\tau$
 satisfies $P$ then there is a group operation $*$, on $X$, and an order
 $<$, on $X$, such that $(X,*,<,\tau)$ is an ordered topological group
 with $<$ inducing $\tau$.

3. Suppose $X$ be a set, $*$ a group operation on $X$ and $\tau$ a
 topology (coming from an order?) on $X$. Are there ``easy'' conditions to see
 that $*$ is continuous with respect to $\tau$?


\bigskip

Email address:\\
kyriakos.papadopoulos1981@gmail.com (Kyriakos Papadopoulos)\\

\end{document}